\newtheorem{thm}{Theorem}[subsection]
\theoremstyle{definition}
\newtheorem{defn}[thm]{Definition}
\theoremstyle{remark}
\theoremstyle{plain}
\newtheorem{theorem} {Theorem}[section]
\newtheorem{lemma}[theorem]{Lemma}
\theoremstyle{definition}
\theoremstyle{remark}
\newtheorem{remark}[theorem]{Remark}
\newcommand{\be} {\begin{equation}}
\newcommand{\ee} {\end{equation}}
\newcommand{\bea} {\begin{eqnarray}}
\newcommand{\eea} {\end{eqnarray}}
\newcommand{\Bea} {\begin{eqnarray*}}
\newcommand{\Eea} {\end{eqnarray*}}
\newcommand{\pa} {\partial}
\newcommand{\de} {\delta}
\newcommand{\De} {\Delta}
\newcommand{\no} {\nonumber}
\newcommand{\R}{\mathbb R}
\begin{document}

\title[]{homoclinic solutions for fourth order traveling wave equations}

\author{Sanjiban Santra, Juncheng Wei }

\address{S. Santra, School of Mathematics and Statistics, The University of Sydney, NSW 2006, Australia.}
\email{sanjiban@maths.usyd.edu.au}
\address{J. Wei, Department of Mathematics, The Chinese University of Hong Kong, Shatin, Hong Kong.}
\email{wei@math.cuhk.edu.hk}

\subjclass{Primary 34B15, 34B60, 34E18}

\keywords{Bistable nonlinearity, mountain pass solution, Morse index, homoclinic solutions, solitons.}

\begin{abstract}  We consider homoclinic solutions of fourth order equations
$$ u^{''''} + \beta^2 u^{''} + V_u (u)=0 \ \ \mbox{in} \ \R\ ,$$
where $V(u)$ is either the suspension bridge type  $V(u)=e^u-1-u$ or Swift-Hohenberg type $ V(u)= \frac{1}{4}(u^2-1)^2$. For the suspension bridge type equation, we prove existence of a homoclinic solution for {\em all} $ \beta \in (0, \beta_*)$ where $ \beta_{*}= 0.7427\cdots$. For the Swift-Hohenberg type equation, we prove  existence of a homoclinic solution for each $\beta \in (0, \beta_{*})$, where $\beta_{*}=0.9342\cdots$. This partially solves a conjecture of Chen--McKenna \cite{YCM1}.

\end{abstract}
\maketitle
\section{Introduction} The study of homoclinic and heteroclinic solutions for  fourth order equations has attracted a lot of attention for the last two decades. Though simple-looking, the fourth order equations appear to be difficult and pose lots of very challenging questions.  We refer to the survey papers \cite{M} and the monograph \cite{PT} for further references.

Motivated by the appearance of traveling wave behavior on the Narrows Tacoma bridge and the Golden Gate bridge, McKenna and Walter \cite{MW1} considered the following nonlinear beam equation
$$w_{tt}+ w_{xxxx}+ V_{w}(w)=0$$ where
$V_{w}$ is the restoring force and is chosen such that the effective force of the cables holds the beam up but the constant force of gravity holds it down  on the assumption that there is no reaction force due to compression. Here $w(x,t)$ denotes the displacement of the beam from the unloaded state \cite{LM}.
This leads to a fourth order beam equation
\be\label{w1}\frac{\pa^2 w}{\pa t^2}+ \frac{\pa^4 w}{\pa x^4} =- w^{+}+ 1 \text{ in } \R\ee  where $w^{+}=\max\{w,0\}.$
Note that (\ref{w1}) also arise in the study of deflection in railway tracks and undersea pipelines. See \cite{AL} and \cite{BH}.\\
If we look for a  traveling wave solution of the type $w(x, t)= 1+ u(x-\beta t),$ then (\ref{w1}) is transformed to a fourth order differential equation of the form
\be\label{1.1} u''''+ \beta^2 u''+ (u+1)^{+}-1=0 \text{ in } \R\ee
where $\beta$ denotes the wave speed.
McKenna--Walter \cite{MW1}  studied (\ref{1.1}) by  solving an ordinary differential equation explicitly as
\begin{equation}
  \label{ws1}
  \left\{
    \begin{aligned}
      u'''' +\beta^2 u''+ u &= 0\; &&\text{if } u\geq -1, \\
 u'''' +\beta^2 u'' &=1 &&\text{if } u\leq -1\\
\end{aligned}
  \right.
\end{equation}
and then glued the two solutions to match at $u=-1$ (called one-trough solutions). In fact, they noticed that as the wave speed approaches $\sqrt{2},$ the solution becomes highly oscillatory in nature, and as $\beta$ approaches $0,$ they appear to go to infinity in amplitude. It was also noticed by numerical experiments that some of the traveling wave solutions appear to be stable,  that is,  when two waves collide, they pass through each other like solitons having many nodes. \\
Later on, Chen-Mckenna \cite{YCM} applied mountain pass theorem on $H^{2}(\R)$ to prove that (\ref{1.1}) has a nontrivial solution. In addition the calculations in \cite{MW1} suggest that there are many solutions,  possibly infinitely many solutions, though it is only known that there exists at least one non-trivial solution. In \cite{CM}, Champneys and McKenna  proved that there exist $0<\beta^{'}<\beta^{''}< \sqrt{2}$ such that (\ref{1.1}) has infinitely many {\em multitroughed} homoclinic solutions for all $\beta\in (\beta^{'}, \beta^{''})$ using the ideas in \cite{BCT}, \cite{D} and \cite{PT3}.\\ The model (\ref{1.1}) has some serious drawbacks. Firstly, it simplifies the nonlinearity of the physical situation, by not allowing nonlinear effects until the deflection is quite large. Secondly,  the non-smoothness of the restoring force leads to numerical difficulties. So the following modified version of (\ref{1.1}) was proposed in \cite{YCM}
\begin{equation}
  \label{a1}
\left\{\begin{aligned}
      u''''+ \beta^2 u''+ e^{u}-1 &= 0 & \text{ in } \R,\\
      u&\not \equiv 0\\
      u &\in H^{2}(\R).
    \end{aligned}
  \right.
\end{equation}
Though the nonlinearity in (\ref{a1}) looks similar to that in (\ref{1.1}),  the study of (\ref{a1}) is quite difficult. In addition, $V(u)= \int_{0}^{u}( e^{t}-1)dt= e^{u}-u-1$ is not symmetric and it has linear growth at $-\infty$ and grows like $e^{u}$ at $+\infty.$\\
In \cite{SW}, Smets--van den Berg used mountain-pass lemma and Struwe's monotonicity trick \cite{St} to prove that  for {\em almost all} $\beta\in (0, \sqrt{2})$, (\ref{a1}) admits a solution.
Later on in \cite{BHMP}, Breuer, Hor\'{a}k, McKenna and Plum  used a computer assisted proof to conclude that if $\beta=1.3,$ there is at least $36$ solutions. It was then conjectured in \cite{BHMP} that there is at least one homoclinic solution for {\em all} $\beta \in (0, \sqrt{2})$.  In this paper, we partially solve this conjecture.

\begin{theorem}\label{th1} There exists $0<\beta_{\star}< 1$ such that for all $\beta\in (0, \beta_{\star}),$ (\ref{a1}) admits a homoclinic solution and $u$ decays in the form $e^{-\tau(\beta) |x|} \cos (ax+ b)$ for some $a, b\in \R$ and $\tau(\beta)>0.$ (Explicitly, $\beta_{\star} \sim 0. 7427\cdots).$
\end{theorem}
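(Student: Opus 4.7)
The strategy is variational: seek critical points on $H^2(\mathbb R)$ of
\[
I_\beta(u) = \int_{\mathbb R}\Bigl[\tfrac12(u'')^2 - \tfrac{\beta^2}{2}(u')^2 + V(u)\Bigr]dx, \qquad V(u)=e^u-u-1,
\]
whose Euler--Lagrange equation is precisely (\ref{a1}). Since $V\geq 0$ with $V''(0)=1$ and, for $\beta<\sqrt 2$, the Fourier symbol $\xi^4-\beta^2\xi^2+1$ is bounded below by $1-\beta^4/4>0$, the quadratic part of $I_\beta$ controls $\|u\|_{H^2}^2$ and the standard mountain-pass geometry is available: $I_\beta\geq\alpha>0$ on a small sphere in $H^2$, while along a suitably chosen test function supported on the negative axis (where $V$ grows only linearly) $I_\beta$ becomes negative. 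Denote the mountain-pass level by $c_\beta$.

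To obtain a bounded Palais--Smale sequence I would follow Smets--van den Berg \cite{SW} and apply Struwe's monotonicity trick to a one-parameter family $I_\beta^{(\lambda)}$ obtained by rescaling the nonlinearity: at each of the almost-every $\lambda$ where $\lambda\mapsto c_\beta^{(\lambda)}$ is differentiable one extracts a bounded Palais--Smale sequence, and since the problem is autonomous, a translation argument together with the concentration--compactness lemma in $H^2(\mathbb R)$ produces a nontrivial weak limit. This reproduces existence of a homoclinic solution for almost every $\beta\in(0,\sqrt 2)$.

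The substantive new step is the upgrade from \emph{almost every} to \emph{every} $\beta\in(0,\beta_\star)$. I would argue by a limiting procedure: pick $\beta_n\to\beta$ in the full-measure set with solutions $u_n$, translate so $u_n$ attains its maximum at the origin, and pass to the limit. Uniform $H^2$ bounds on $u_n$ would follow from a uniform upper bound on $c_{\beta_n}$ together with the Pohozaev-type identity obeyed by critical points of $I_\beta$, so a subsequence converges weakly in $H^2$ and locally uniformly (by $H^2\hookrightarrow C^0$) to a limit $u_\beta$ solving (\ref{a1}). The crux is non-triviality of $u_\beta$: the concentration--compactness dichotomy on $\mathbb R$ produces a threshold energy $E_\star(\beta)$ above which vanishing of the local profiles cannot be ruled out, and one must verify a strict inequality of the form $c_\beta<E_\star(\beta)$. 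I would check this by inserting into $I_\beta$ an explicit ansatz modelled on the asymptotic profile $e^{-\tau(\beta)|x|}\cos(a(\beta)x+b)$, computing elementary integrals, and optimizing the parameters. The explicit constant $\beta_\star\approx 0.7427$ is precisely the largest $\beta$ for which this test-function inequality remains valid; producing and optimizing that quantitative estimate is the main technical obstacle.

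The decay statement is a short ODE endgame. Any $H^2(\mathbb R)$-solution tends to $0$ at $\pm\infty$, so near infinity $V_u(u)=e^u-1\sim u$ and $u$ satisfies a perturbation of the linear equation $u''''+\beta^2 u''+u=0$. The characteristic polynomial $\lambda^4+\beta^2\lambda^2+1$ has, for $\beta<\sqrt 2$, four complex roots $\pm\tau(\beta)\pm ia(\beta)$ with $\tau(\beta)=\tfrac12\sqrt{2-\beta^2}>0$ and $a(\beta)=\tfrac12\sqrt{2+\beta^2}>0$; standard ODE asymptotic theory combined with $L^2$ integrability selects the exponentially decaying combinations and yields the claimed form $u(x)\sim Ce^{-\tau(\beta)|x|}\cos(a(\beta)x+b)$ as $|x|\to\infty$.
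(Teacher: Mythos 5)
Your setup (mountain-pass geometry for $I_\beta$, Struwe's monotonicity trick following Smets--van den Berg for almost every $\beta$, passage to the limit $\beta_n\to\beta$, and the linearized decay analysis with $\tau(\beta)=\tfrac12\sqrt{2-\beta^2}$) matches the paper. The genuine gap is at the crux you gloss over in one sentence: the claim that uniform $H^2$ bounds on $u_{\beta_n}$ ``follow from a uniform upper bound on $c_{\beta_n}$ together with the Pohozaev-type identity.'' They do not. The identity obtained by combining the equation with the Pohozaev relation reads
\[
\int_{\R}(u'')^2+\beta^2\int_{\R}(u')^2 \;=\; 2\int_{\R}(e^u-1)u+2\int_{\R}(e^u-u-1)\,dx,
\]
and since $V(u)=e^u-u-1$ grows only \emph{linearly} as $u\to-\infty$ (the Ambrosetti--Rabinowitz condition fails), neither the energy bound nor this identity controls the right-hand side on the set where $u$ is very negative. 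Closing this is the entire content of the paper: one first shows, via the Ghoussoub min-max machinery, that the mountain-pass solution has \emph{Morse index at most one}; then one splits $\R$ into $\{u\ge u_\star\}$, where explicit spectral inequalities of the form $\int (u'')^2-\beta^2\int(u')^2+\tfrac{k^2\beta^4}{4}\int u^2\ge 0$ make the localized energy nonnegative, and $\{u\le u_\star\}$, a finite union of intervals on all but one of which the Morse index bound forces a quadratic form to be nonnegative, which in turn bounds the lengths of those intervals and (after an eigenvalue comparison) shows the localized energy is nonnegative there too. The single exceptional interval is handled again by the Morse index (two disjoint negative directions would give index two). Without the Morse index input your limiting argument has no a priori bound to run on.

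Two further misattributions. First, the explicit constant $\beta_\star\approx 0.7427$ does not come from a test-function inequality $c_\beta<E_\star(\beta)$ ruling out vanishing in concentration--compactness; it is exactly the threshold below which the interval-length estimate on $\{u\le u_\star\}$ is incompatible with the negativity of the localized energy, i.e.\ it belongs to the $H^2$-bound step, not to the compactness step. Second, nontriviality of the limit is not obtained by an energy threshold: the paper derives from a Fourier-side lower bound on the quadratic form the existence of points $x_n$ with $u_{\beta_n}(x_n)\le \ln(\beta_n^4/4)<0$, a quantitative pointwise deviation that survives the local limit. Your concentration--compactness threshold route is not carried out and, as stated, would itself require the very estimates you have not supplied.
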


 We will also consider the Swift-Hohenberg equation which  is a general model for pattern-forming process derived in \cite{SH} to describe random thermal fluctuations in the Boussinesque equation and in the propagation of lasers \cite{LMN}.  It also arises in the study of ternary mixtures made up of oil, water and surfactant agents yielding
a free energy functional of the Ginzburg-Landau equation given by,
\be\label{ter}\Psi(u)= \int_{\R^3} [ ( \De u)^2 + h(u) |\nabla u|^2+ V(u)]dx\ee
where the scalar parameter $u$ is related to the local difference of the concentration of oil and water \cite{GS}. The function $h$ denotes the amphilic properties and $V(u)$ denotes the potential (the bulk free energy of the ternary mixture) \cite{BM}.  Not only they have important applications in science especially in statistical mechanics of self avoiding surfaces, but also in cell membrane biology, in string theory and in high energy physics \cite{P}. The existence of heteroclinic solution has been studied extensively in \cite{BSTT} when $h$ changes sign. \\
In this paper we also consider
\begin{equation}
\label{e11}
u''''+ \beta^2 u''+ V_{u}(u)=0 \text{ in } \R
\end{equation} where $V_{u}(u)= -u+u^3.$ For this model, a question of interest is phase transition i.e. solutions connecting to $u=\pm 1.$ Peletier and Troy studied homoclinic and heteroclinic solutions  when $h(u)= -\beta^2$ in \cite{PT1}, \cite{PT2}, though nothing is known about the existence of heteroclinic solutions of (\ref{e11}) for $0<\beta<\sqrt{8}.$ Buffoni \cite{B} proved that if $V_{u}(u)=-u+ u^2$, then (\ref{e11}) admits at least one solution for all $\beta\in (0, \sqrt{2}).$\\
Our techniques in proving Theorem \ref{th1} actually allows to conclude similar results for the well-known Swift-Hohenberg model
\begin{equation}
  \label{a2}
\left\{\begin{aligned}
      u''''+ \beta^2 u''+ u(u^2-1) &= 0 &&\text{in } \R\\
      u-1 &\in H^{2}(\R)
      .\end{aligned}
  \right.
\end{equation}
Smets-van den Berg \cite{SW}  proved that for {\em almost all} $\beta \in (0, \sqrt{8})$, problem (\ref{a2}) has a homoclinic solution.  For (\ref{a2}), we have
\begin{theorem}\label{th2} For each  $\beta\in (0, \beta_0)$, where $\beta_0 \approx 0.9342\cdots$,  (\ref{a2}) admits a homoclinic solution.
\end{theorem}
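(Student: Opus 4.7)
The plan is to adapt the mountain-pass strategy used in the proof of Theorem \ref{th1} to the symmetric Swift--Hohenberg potential. After the change of variable $v = u-1$, the problem becomes finding a nontrivial $v \in H^{2}(\R)$ that solves
\[
v'''' + \beta^{2} v'' + 2v + 3v^{2} + v^{3} = 0,
\]
which is the Euler--Lagrange equation of the functional
\[
J_{\beta}(v) = \int_{\R}\left[\frac{1}{2}(v'')^{2} - \frac{\beta^{2}}{2}(v')^{2} + W(v)\right]dx, \qquad W(v) = \frac{1}{4}v^{4} + v^{3} + v^{2}.
\]
The symbol $\frac{1}{2}\xi^{4} - \frac{\beta^{2}}{2}\xi^{2} + 1$ is strictly positive whenever $\beta^{4} < 8$, so throughout $(0,\beta_{0})\subset(0,\sqrt[4]{8})$ the origin is a nondegenerate local minimum of $J_{\beta}$. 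The cubic term in $W$ supplies the mountain-pass geometry: pushing $v$ in a negative direction drives $\int W(v)\,dx$ to $-\infty$ faster than the quadratic piece grows, so $J_{\beta}(-t\varphi)\to -\infty$ for a suitable bump $\varphi\geq 0$ as $t\to\infty$.

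Next I would define the mountain-pass level
\[
c(\beta) = \inf_{\gamma\in\Gamma}\max_{s\in[0,1]} J_{\beta}(\gamma(s)),
\]
with $\Gamma$ the set of continuous paths in $H^{2}(\R)$ joining $0$ to a point of negative energy, and study Palais--Smale or Cerami sequences at that level. Because $\R$ is translation-invariant, loss of compactness is governed by a Lions-type concentration--compactness dichotomy: a non-convergent bounded sequence splits, after translations, into finitely many profiles, each itself a solution of the limiting equation. To rule out splitting one exhibits a concrete admissible path whose maximum energy lies strictly below twice the ground-state energy; such a path can be built from a modulated exponential $e^{-\tau|x|}\cos(ax+b)$ whose parameters $\tau$ and $a$ are read off from the characteristic polynomial $\lambda^{4}+\beta^{2}\lambda^{2}+2=0$ (whose roots form a complex conjugate quadruple throughout the relevant range of $\beta$). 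The range of $\beta$ for which this explicit comparison succeeds is precisely what fixes the numerical threshold $\beta_{0}\approx 0.9342$.

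Once compactness at the mountain-pass level is secured, the standard deformation lemma produces a nontrivial critical point $v_{\beta}\in H^{2}(\R)$, and $u_{\beta}=1+v_{\beta}$ is the desired homoclinic solution of (\ref{a2}); classical smoothness and decay follow by the usual elliptic bootstrap together with a linearization at $\pm\infty$.

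The main technical obstacle I anticipate is the compactness step: producing a trial path that beats the concentration--compactness threshold for \emph{every} $\beta\in(0,\beta_{0})$, rather than merely for almost every $\beta$ as was obtained in \cite{SW} via Struwe's monotonicity trick. Making the comparison uniform in $\beta$ requires tracking carefully how the modulated-exponential ansatz depends on $\beta$ through the characteristic roots, and the explicit value $0.9342\cdots$ is precisely the largest wave speed for which this comparison can still be verified.
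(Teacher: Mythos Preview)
Your proposal takes a genuinely different route from the paper, and the route you sketch has a real gap.

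The paper does \emph{not} attack compactness directly via a Lions-type splitting argument and an energy threshold of the form $c(\beta)<2c_{\mathrm{gs}}$. Instead it leverages the Smets--van den Berg result \cite{SW}, which already supplies a mountain-pass homoclinic $u_{\beta}$ (with $u_\beta>-2$ after the shift) for \emph{almost every} $\beta\in(0,\sqrt{8})$, together with the observation (Remark~\ref{morse} / Lemma~\ref{ge}) that such a solution has Morse index at most one. The whole proof then reduces to obtaining a uniform $H^{2}$ bound on $u_{\beta_{n}}$ along a sequence $\beta_{n}\to\beta$, and passing to the limit. That bound is obtained by a level-set decomposition: setting $u_{\star}=-2+\tfrac{k_{0}}{\sqrt{2}}\beta^{2}$, the energy over $\{u\geq u_{\star}\}$ is nonnegative by the sharp quadratic inequalities of Lemmas~\ref{l1}--\ref{l2}, while on each component of $\{u\leq u_{\star}\}$ a Pohozaev identity plus the sign observation $u(u+1)(u+2)(u-u_{\star})\leq 0$ (valid because $-2<u\leq u_{\star}\leq -1$) forces $I_{\beta}|_{(a,b)}\geq 0$ directly. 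The numerical threshold $\beta_{0}\approx 0.9342$ is exactly $\beta_{0}=\sqrt{\sqrt{2}/k_{0}}$ with $4k_{0}^{2}-2k_{0}-3=0$; it is the condition $u_{\star}\leq -1$ that fixes it, not any trial-path comparison.

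In your outline the crucial step is asserted rather than carried out: you say one ``exhibits a concrete admissible path whose maximum energy lies strictly below twice the ground-state energy,'' but you never produce such a path, nor explain why the modulated exponential $e^{-\tau|x|}\cos(ax+b)$ does the job, nor how this comparison yields the specific value $0.9342$. There is also a circularity: the ground-state energy is precisely the unknown $c(\beta)$ you are trying to realize, so ``$c(\beta)<2c_{\mathrm{gs}}$'' is not a condition you can verify from an explicit test function alone without further structural input. The paper sidesteps all of this by never trying to prove Palais--Smale for every $\beta$; it instead bounds the \emph{already existing} almost-every-$\beta$ solutions uniformly, and the Morse-index and level-set machinery (Section~3) is what makes that bound possible and what produces $\beta_{0}$.
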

Here $ \beta_0= \sqrt{ \frac{\sqrt{2}}{k_0}} $ where $4 k_0^2 -2k_0 -3=0$. In particular, $\beta_0 \approx 0.9342\cdots.$ As far as we know, Theorems \ref{th1} and \ref{th2} are the first result in establishing the existence of homoclinic solutions for explicit $\beta'$s.

Let us recall some of the difficulties associated to problem (\ref{a1}):

\medskip
\noindent
(a) The functional associated to (\ref{a1}) does not satisfy the global {\em Ambrosetti-Rabinowitz condition}, i.e.
$$ V_{t}(t) t-\theta V(t)\geq 0 \ $$
 for some $ \   \theta > 2 $  and for all $t\in \R.$ This poses a major problem in proving the boundedness of a Palais-Smale sequence in the $H^2$-norm. In \cite{SW}, Struwe's monotonicity trick was used to conclude the $H^2$-boundedness.

\medskip
\noindent
(b) Linearizing  the equation (\ref{a1}) at $u=0$ we obtain
\be\label{lin1} w''''+ \beta^2 w''+ w=0. \ee The roots of (\ref{lin1}) are given by
\be\label{lin2} \mu_{\pm}^2= \frac{-\beta^2\pm \sqrt{\beta^4- 4}}{2}.\ee
Note that if  $\beta\geq \sqrt{2},$ then $\mu_{\pm}^2$ are real and (\ref{a1}) can be written as
$$u''''+\beta^2 u''+ u+ e^{u}-u-1=0$$ and hence can be decomposed into a system
\begin{equation}
  \label{a3}
\left\{\begin{aligned}
      u''-\mu_{+}^2 u  &= w &&\text{in } \R\\
      w''-\mu_{-}^2 w  &= 1- u- e^{u} &&\text{in } \R.
    \end{aligned}
  \right.
\end{equation}
This formulation in fact helps us to obtain a-priori estimates for $u$ and $w$ using strong maximum principle. But if $0<\beta<\sqrt{2}$ we cannot apply this method to reduce to systems, and in fact monotone homoclinics cannot exist in this range.\\

\medskip
\noindent
(c) Let $w= u''.$ Then (\ref{a1}) can be written as $w''+ c w= 1- e^{u}$ where $c=\beta^2>0.$  As a result, we cannot apply maximum principle and we cannot say whether a solution of (\ref{a1}) after a certain stage is positive or negative.\\

\medskip
\noindent
(d) A solution of (\ref{a1}) tends to oscillate infinitely many times even if we have a bound on the Morse index of the solution. This poses a lot of trouble in obtaining solutions converging to zero as $x\rightarrow \pm \infty$.\\

Our main idea of proving Theorem \ref{th1} is to bound the $H^2$ norm  by the energy and the {\em Morse index}.  A crucial tool is the Morse index of the mountain-pass solutions. We believe that a more refined analysis should cover the full range $\beta \in (0, \sqrt{2})$.

Finally let us mention that the idea of using Morse index to bound solutions  has been used in several recent papers for second order elliptic equations. See  Dancer \cite{Da} and Farina \cite{Fa} and  the references therein. However, this idea has never been used in fourth order equations because the Moser iteration does not work.

\medskip\noindent{\bf Notations:} Throughout this paper, by the equality $B =O(A)$ we mean that there exists $C>0$ such
that $|B|\leq C A $ and $C\approx A$ means that $C= A+ o(1)$ where $o(1)\rightarrow 0$ as $\beta\rightarrow 0.$

\section{Preliminaries}
In this section, we prove existence of a mountain-pass solution and show that its Morse index is {\em at most one}. This will be used crucially in the next section.
We first recall the following definition.
\begin{defn} Let $H$ be a Hilbert space and  $B$ be a closed set of $H.$ Let $\mathcal{F}$ be a family of compact subsets of $H.$ Then we call $\mathcal{F}$ a {\em homotopy stable family with boundary $B$} if\\
(a) Every  $A\in \mathcal{F}$ contains $B.$\\
(b) For any $A\in \mathcal{F}$ and any $\eta\in C(H \times [0,1]; H)$ with $\eta(x, t)= x$ and for all $(x, t)\in (H \times\{0\})\cup (B\times [0,1])$ implies that $\eta (A\times [0,1])\in \mathcal{F}.$
\end{defn}
\begin{defn}
A family $\mathcal{F}$ of $G-$ subsets is said to be $G-$ homotopic of  dimension $N$ with boundary $B$ if there exists a compact $G-$ subset $D$ of $\R^N$ containing a closed subset $D_{0}$ and a continuous $G-$ invariant map $\sigma
': D\rightarrow B$ such that
$$\mathcal{F}= \{A\subset H: A= f(D) \text{ for some } f\in C_{G}(D, H) \text{ with } f=\sigma' \text{ on } D_{0}\}.$$
\end{defn}
Define $${K}_{c}= \{u\in H: I(u)=c\, \, ;\langle I'(u), u\rangle =0\}.$$
A Lie group $G$ is said to be a free action if $gx=x$ implies $g=i_{d}$  for any $x\in H$.
We borrow the following lemma from \cite{EG} on page 232.
\begin{lemma}\label{ge} Let $G$ be a compact Lie group acting freely and differentiably  on  $H.$ Let $I$ be a $G-$ invariant functional on $H$ and $\mathcal{F}$ be a $G-$ homotopic of  dimension $N$ stable with boundary $B.$ If  $I$ satisfies $(PS)_{c}$ where $c:= c(I, \mathcal{F})$ and $I''(u)$ is a Fredholm for each level $c$ and $\sup_{B}I< c.$ Then there
exists $u\in K_{c}$ with Morse index of $u$ at most $N.$
\end{lemma}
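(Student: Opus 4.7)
The plan is to combine the standard minimax deformation argument of Ghoussoub--Preiss for homotopy stable families with a Morse-theoretic refinement that controls the number of negative eigenvalues at the critical point produced. Set $c = c(I, \mathcal{F}) := \inf_{A \in \mathcal{F}} \sup_{u \in A} I(u)$. Since each $A = f(D)$ is compact, the inner sup is attained, and the hypothesis $\sup_B I < c$ guarantees $c$ is a strict minimax level above the boundary. First I would prove $K_c \neq \emptyset$: otherwise, by $(PS)_c$ the set $K_c$ is empty and one can build a $G$-equivariant pseudo-gradient flow $\eta$ on $H$ that fixes $B$ and decreases $I$ by a definite amount on $I^{-1}([c-\varepsilon, c+\varepsilon])$. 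Applying such a flow to a near-optimal $A \in \mathcal{F}$ produces $\eta(A\times[0,1])\in\mathcal{F}$ with strictly smaller sup, contradicting the definition of $c$.

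The real work is the Morse index bound. Suppose for contradiction that every $u \in K_c$ has Morse index $m(u) \geq N+1$. By $(PS)_c$ combined with the Fredholm hypothesis on $I''(u)$, the set $K_c$ is compact and the negative eigenspace $E^{-}(u)$ has dimension at least $N+1$. Continuity of $u \mapsto I''(u)$ together with stability of the negative inertia under small perturbations of a Fredholm operator gives a neighborhood $\mathcal{U}$ of $K_c$ on which a continuous $G$-equivariant field of $(N+1)$-dimensional subspaces $E^{-}(u) \subset H$ is defined, with $I''(u)$ uniformly negative definite on it. To handle possible degeneracies I would invoke a Marino--Prodi type $C^2$-small compact perturbation of $I$ in $\mathcal{U}$ to render critical points non-degenerate; the Fredholm assumption ensures the perturbed negative index is still at least $N+1$.

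Next I would build an augmented deformation $\eta:H\times[0,1]\to H$ that is $G$-equivariant (using that $G$ acts freely and differentiably, so that averaging over $G$ or cross-sectioning the orbit space preserves regularity), fixes $B$ and all of $H\setminus\mathcal{U}'$ for a slightly larger $\mathcal{U}'$, and combines the pseudo-gradient flow of $I$ outside $\mathcal{U}$ with a push along the negative direction field $E^{-}(u)$ inside $\mathcal{U}$. By the homotopy stability axiom (b), $\eta(A\times[0,1]) \in \mathcal{F}$ for every $A \in \mathcal{F}$. The key topological/dimension counting then runs as follows: given any $A = f(D)$ in $\mathcal{F}$ with $f: D \to H$ and $\dim D = N$, since the target negative bundle has fiber dimension $\geq N+1$, a transversality argument in the $G$-equivariant setting lets me perturb $f$ to a nearby $\tilde{f}$, still agreeing with $\sigma'$ on $D_0$, whose image misses $K_c$. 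Then the augmented deformation strictly decreases $I$ on $\tilde{f}(D)$, producing a member of $\mathcal{F}$ with $\sup I < c$, contradicting the definition of $c$.

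The main obstacle I anticipate is the equivariant transversality / dimension-counting step: pushing an $N$-parameter family off a negative cone of codimension at most $\dim H - (N+1)$ while preserving both $G$-equivariance and the boundary condition $f|_{D_0} = \sigma'$ must be done uniformly in a neighborhood of near-optimal $A$, and requires carefully cutting off the perturbation outside $\mathcal{U}$. The second subtlety is that since $(PS)_c$ is the only compactness available, one must truncate the flow on sublevel sets of $I$ to guarantee that the deformation stays inside the class $\mathcal{F}$ and does not escape to infinity; controlling this in a fourth-order context is exactly where the Fredholm property of $I''(u)$ becomes indispensable.
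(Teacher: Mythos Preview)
The paper does not actually prove this lemma: its entire proof reads ``For the proof, see \cite{G}, Chapter 10,'' i.e.\ it simply invokes Ghoussoub's monograph. Your proposal, by contrast, sketches the substance of that argument --- the minimax deformation to get $K_c\neq\emptyset$, the contradiction assumption that every critical point has Morse index at least $N+1$, the Marino--Prodi perturbation to non-degenerate critical points, and the dimension-counting step that an $N$-parameter family $f(D)$ can be pushed off the stable manifolds of critical points whose unstable (negative) directions have dimension $\geq N+1$. This is indeed the skeleton of the proof in Ghoussoub's Chapter~10, so your approach is not a different route but rather an unpacking of the cited reference.

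As a sketch it is sound; the one place where your language is slightly imprecise is the transversality step. The clean way to phrase it is not that the image ``misses $K_c$'' directly, but that after Marino--Prodi perturbation the critical set near level $c$ is finite and non-degenerate, and the stable manifold of each such point has codimension $\geq N+1$ in $H$; hence a generic $C^1$-small perturbation of the $N$-dimensional map $f:D\to H$ (fixing $f|_{D_0}=\sigma'$) will have image disjoint from these stable manifolds, after which the gradient flow carries $f(D)$ strictly below $c$. The $G$-equivariance is handled by passing to the quotient $H/G$ (possible since the action is free and differentiable) rather than by averaging the flow. With those adjustments your outline matches the cited source.
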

\begin{proof} For the proof, see \cite{G}, Chapter 10. \end{proof}

We define
\begin{equation}
\label{Ibeta}
I_{\beta}(u)= \frac{1}{2}\int_{\R} |u''|^2dx-\frac{\beta^2}{2} \int_{\R} |u'|^2dx+  \int_{\R} (e^{u}-u-1)dx,  \ \ \ \forall u\in H^{2}(\R).
\end{equation}
First note that $I_{\beta}$ is $C^{2}(H^{2}(\R))$ and it does not satisfy Palais-Smale condition  due to translation invariance of the functional. Moreover, if $u$ is a critical point of $I_{\beta},$ then $u$ is a classical solution of (\ref{a1}).
Also we have $I_{\beta}(0)=0.$
\begin{lemma}\label{n2} There exist $r>0, c>0$ such that $I_{\beta}(u)\geq c\|u\|^2_{H^{2}(\R)}$ for all $u\in B_{r}(0)$ where
$B_{r}$ is a ball centered at the origin in $H^{2}(\R).$ In fact, we can choose $r$ and $c$ to be independent of $\beta.$
\end{lemma}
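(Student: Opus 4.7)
The plan is to show that $I_\beta$ admits a strict local minimum at $u=0$ with quadratic growth. The argument splits into (i) a Taylor expansion of $V(u)=e^u-u-1$ near the origin and (ii) a coercivity estimate for the resulting quadratic form on $H^2(\R)$.

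For step (i), I would write $V(u)=\tfrac{u^2}{2}+R(u)$, where $|R(u)|\le e\,|u|^3$ for $|u|\le 1$. The continuous embedding $H^2(\R)\hookrightarrow L^\infty(\R)$ provides an absolute constant $C_0$ with $\|u\|_\infty\le C_0\|u\|_{H^2}$. Given any small $\eta>0$, choose $r=r(\eta)>0$, independent of $\beta$, so that $\|u\|_{H^2}\le r$ forces $\|u\|_\infty\le \eta/e$. Then $|R(u(x))|\le \eta\,u(x)^2$ pointwise, hence
\[
\int_\R V(u)\,dx\;\ge\;\Bigl(\tfrac12-\eta\Bigr)\int_\R u^2\,dx.
\]

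For step (ii), I would show that, for $\eta$ chosen small enough, the quadratic form
\[
Q_\beta(u):=\tfrac12\|u''\|_2^2-\tfrac{\beta^2}{2}\|u'\|_2^2+\Bigl(\tfrac12-\eta\Bigr)\|u\|_2^2
\]
dominates $c\|u\|_{H^2}^2$ uniformly for $\beta$ in a compact interval bounded away from $\sqrt{2}$ (which covers the range $[0,\beta_{\star}]$ with $\beta_{\star}\approx0.7427$ needed in Theorem~\ref{th1}). By Plancherel this reduces to the symbol inequality
\[
\xi^4-\beta^2\xi^2+(1-2\eta)\;\ge\; c\,(1+\xi^2+\xi^4),\qquad \xi\in\R.
\]
Writing $y=\xi^2\ge 0$ and rearranging, one needs $(1-c)y^2-(\beta^2+c)y+(1-2\eta-c)\ge 0$ for every $y\ge 0$; its discriminant $(\beta^2+c)^2-4(1-c)(1-2\eta-c)$ reduces at $c=0$ to $\beta^4-4+8\eta$, which is strictly negative for all $\beta\in[0,\beta_{\star}]$ as soon as $8\eta<4-\beta_{\star}^4$. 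By continuity in $c$ the discriminant stays negative for all sufficiently small $c>0$, furnishing a single $c$ that works uniformly in $\beta$.

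The only delicate point is the $\beta$-uniformity of $r$ and $c$: the former depends solely on the Sobolev constant and Taylor's remainder and is therefore $\beta$-free, while the latter survives because $[0,\beta_{\star}]$ is compact and lies well inside $(0,\sqrt{2})$. Combining (i) and (ii) yields $I_\beta(u)\ge c\|u\|_{H^2(\R)}^2$ on $B_r(0)$, as claimed.
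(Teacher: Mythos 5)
Your proposal is correct and follows essentially the same route as the paper: a pointwise Taylor lower bound $e^u-u-1\ge(\tfrac12-\eta)u^2$ obtained from the Sobolev embedding $H^2(\R)\hookrightarrow L^\infty(\R)$, followed by a Plancherel reduction to positivity of the symbol $\xi^4-\beta^2\xi^2+1-2\eta$ against $1+\xi^2+\xi^4$. The only difference is cosmetic — the paper verifies the symbol inequality with the explicit bound $\xi^2\le\tfrac13(\xi^4+\xi^2+1)$ and the choice $\eta=\tfrac{2-\beta^2}{12}$ (yielding $c=\tfrac{2-\beta^2}{12}$), whereas you use a discriminant-plus-continuity argument; both degenerate as $\beta\to\sqrt2$ and give a uniform constant only for $\beta$ in a compact subinterval, exactly as you note.
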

\begin{proof} If $\|u\|_{H^{2}(\R)}< r,$ then by Sobolev embedding theorem $\|u\|_{L^{\infty}(\R)}< Cr$ for some $C>0$ (independent of $r$). Let us choose $r$ so small   that  if $\|u \|_{H^2 (\R)} <r$, then $(e^{u}-u-1) \geq (\frac{1}{2}-\eta) u^2$ for  some $ \eta = \frac{2-\beta^2}{12}$.
Let $\hat{u}(\xi)$ be a Fourier transform of $u(x).$ Taking Fourier transform we have
\bea I_{\beta}(u)&\geq& \frac{1}{2}\int_{\R} (\xi^4 -\beta^2 \xi^2 + 1) (\hat{u}(\xi))^2 d\xi-\eta\int_{\R} \hat{u}^2(\xi)d\xi\no \\&\geq& \frac{1}{2}\int_{\R} (\xi^4 + \xi^2 + 1- (\beta^2+1)\xi^2) (\hat{u}(\xi))^2 d\xi-
 \eta\int_{\R} \hat{u}^2(\xi)d\xi \no \\&\geq& \frac{1}{2}\int_{\R} \bigg(\xi^4 + \xi^2 + 1- \frac{(\beta^2+1)}{3}(\xi^4+\xi^2+1)\bigg)(\hat{u}(\xi))^2 d\xi \no\\&-& \eta\int_{\R} \hat{u}^2(\xi)d\xi\no \\&\geq& \frac{2-\beta^2}{6}\int_{\R} (\xi^4 + \xi^2 + 1)(\hat{u}(\xi))^2 d\xi-\eta \|u\|^2_{H^{2}(\R)}\no\\&=& \frac{2-\beta^2-6\eta}{6}\|u\|^2_{H^{2}(\R)}=\frac{2-\beta^2}{12}\|u\|^2_{H^{2}(\R)}.
\eea
\end{proof}
\begin{lemma}\label{n3} There exists $e(\text{independent of }\beta)\in H^{2}(\R)$ such that $I_{\beta}(e)<0.$
\end{lemma}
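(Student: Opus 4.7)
My approach will be to construct $e$ explicitly as a compactly supported, large-amplitude, wide \emph{negative} bump, exploiting the asymmetry of $V(u)=e^{u}-u-1$. The essential point is that $V(u)$ grows exponentially as $u\to+\infty$ but only linearly, $V(u)\sim -u$, as $u\to-\infty$, so negative test functions keep $\int V(e)$ manageable. Note that a small-amplitude ansatz $e=\varepsilon\phi$ cannot give $I_\beta(e)<0$ when $\beta\leq\sqrt2$, since
\[
I_\beta(\varepsilon\phi)=\tfrac{\varepsilon^2}{2}\bigl(\|\phi''\|_2^2-\beta^2\|\phi'\|_2^2+\|\phi\|_2^2\bigr)+o(\varepsilon^2),
\]
and the Cauchy--Schwarz bound $\|\phi'\|_2^2\leq\tfrac12(\|\phi\|_2^2+\|\phi''\|_2^2)$ forces the leading quadratic form to be non-negative; large amplitude is therefore essential.

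I would fix once and for all an $\eta\in C_c^\infty(\R)$ with $0\leq\eta\leq 1$, $\eta(0)=1$, and $\mathrm{supp}\,\eta\subset[-1,1]$, and set $e(x):=-M\,\eta(x/L)$ for parameters $M,L>0$ to be chosen. The substitution $y=x/L$ yields
\[
\|e''\|_2^2=\tfrac{M^2}{L^3}\|\eta''\|_2^2,\qquad \|e'\|_2^2=\tfrac{M^2}{L}\|\eta'\|_2^2,
\]
while the elementary inequality $e^{-s}+s-1\leq s$ for $s\geq 0$ gives
\[
\int_{\R} V(e)\,dx=L\int_{\R}\bigl(e^{-M\eta(y)}+M\eta(y)-1\bigr)dy\leq LM\|\eta\|_{L^1}.
\]
Combining these estimates,
\[
I_\beta(e)\leq \tfrac{M^2}{2L^3}\|\eta''\|_2^2-\tfrac{\beta^2 M^2}{2L}\|\eta'\|_2^2+LM\|\eta\|_{L^1}.
\]

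The closing step would be a two-stage choice of parameters: first pick $L$ large enough that $\|\eta''\|_2^2/L^2\leq\tfrac12\beta^2\|\eta'\|_2^2$, absorbing the first positive term into half of the negative one; then pick $M\gg L^2/\beta^2$ so that the quadratic-in-$M$ negative term dominates the linear-in-$M$ term $LM\|\eta\|_{L^1}$. This gives $I_\beta(e)<0$. Both choices depend only on a lower bound for $\beta$, so the resulting $e\in H^2(\R)$ may be taken as a single fixed element throughout the analysis of any $\beta$-interval bounded away from $0$, as the statement of the lemma suggests. The only conceptual obstacle---the exponential growth of $V$ at $+\infty$---is sidestepped at the outset by the sign choice $e\leq 0$; the remaining estimates are purely elementary scaling.
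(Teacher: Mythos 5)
Your proof is correct and follows essentially the same route as the paper: take a negative, compactly supported bump, dilate it so that the quadratic form $\int(u'')^2-\beta^2\int(u')^2$ becomes negative, then amplify it so that the potential term, which grows only linearly as $u\to-\infty$, is dominated by the quadratic part; your version merely replaces the paper's qualitative limit $t^{-2}(e^{tu}-tu-1)\to 0$ on the bounded support by the explicit elementary bound $e^{-s}+s-1\le s$. Your closing caveat that the construction is uniform only for $\beta$ bounded away from $0$ is accurate and applies equally to the paper's own proof, whose dilation parameter $\lambda$ likewise depends on $\beta$ despite the lemma's claim of $\beta$-independence.
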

\begin{proof} Choose $v\in H^{2}(\R)$ such that $v$ has compact support and $v<0.$ Let $u_{\lambda}(x)= v(\lambda x), \lambda>0.$ Then it is possible to choose a $\lambda>0$ such that
\be\no \int_{\R} |u_{\lambda}''|^2-\beta^2 |u_{\lambda}'|^2=-\de<0.\ee
For $\{u_{\lambda}<0\}$ we have $e^{u}-u-1> 0.$
Now consider
\bea \frac{I_{\beta} (tu_{\lambda})}{t^2}&=&\frac{1}{2} \int_{\R} (|u_{\lambda}''|^2-\beta^2 |u_{\lambda}'|^2)+  \int_{u_{\lambda}<0}\frac{e^{tu_{\lambda}}-tu_{\lambda}-1}{t^2}dx\no\\&= & -\frac{\de}{2}+ \int_{u_{\lambda}<0}\frac{e^{tu_{\lambda}}-tu_{\lambda}-1}{t^2}dx.\eea
But note that the second term is an integral over a bounded domain as support of $u$ is compact
and $\frac{e^{tu_{\lambda}}-tu_{\lambda}-1}{t^2}\rightarrow 0$ as $t \rightarrow \infty.$
This implies that
$$I_{\beta}(t u_{\lambda}) \rightarrow -\infty   \text{ as } t \rightarrow +\infty.$$ Hence the result follows.
\end{proof}
Choose $e= t u_{\lambda}.$ From \cite{SW} we know that for almost all $\beta\in (0, \sqrt{2}),$ $I_{\beta}$ satisfies Palais-Smale condition and hence there exists a mountain pass critical value $c_{\beta}$ and $$c_{\beta}= \inf_{\gamma\in \Gamma} \max_{t\in [0,1]}I_{\beta}(\gamma(t))>0$$
where $$\Gamma= \{\gamma: \gamma\in C([0,1], H^2(\R)); \gamma(0)=0, \gamma(1)=e \}$$ and $I_{\beta}(e)<0.$
\begin{lemma}\label{mpcb} There exists a constant $C>0$ independent of $\beta$ such that $c_{\beta}\leq C.$
\end{lemma}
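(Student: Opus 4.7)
The plan is to exhibit an explicit path in $\Gamma$ and estimate the maximum of $I_\beta$ along it. Reusing the construction from Lemma \ref{n3}, I fix $v\in C_c^{\infty}(\R)$ with $v\leq 0$ and $v\not\equiv 0$, and set $v_\lambda(x)=v(\lambda x)$ for some $\lambda=\lambda(\beta)>0$. The elementary scaling identities give $\|v_\lambda''\|_{L^2}^2=\lambda^3\|v''\|_{L^2}^2$, $\|v_\lambda'\|_{L^2}^2=\lambda\|v'\|_{L^2}^2$, and
\[
\int_\R(e^{tv_\lambda}-tv_\lambda-1)\,dx=\tfrac{1}{\lambda}F(t),\qquad F(t):=\int_\R(e^{tv(y)}-tv(y)-1)\,dy,
\]
where $F$ is smooth, nonnegative, convex, and vanishes to second order at $0$. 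I then choose $\lambda$ small enough that $-\delta:=\lambda^3\|v''\|_{L^2}^2-\beta^2\lambda\|v'\|_{L^2}^2<0$; a natural normalization is $\lambda^2=\beta^2\|v'\|_{L^2}^2/(2\|v''\|_{L^2}^2)$.

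By Lemma \ref{n3} there exists $T=T(\beta)>0$ with $I_\beta(Tv_\lambda)<0$, so the straight path $\gamma(s)=sTv_\lambda$ belongs to $\Gamma$, and
\[
c_\beta\leq\max_{t\in[0,T]}I_\beta(tv_\lambda)=\max_{t\in[0,T]}\!\Bigl(-\tfrac{\delta}{2}t^2+\tfrac{F(t)}{\lambda}\Bigr).
\]
Since $\phi(t):=I_\beta(tv_\lambda)$ satisfies $\phi(0)=\phi'(0)=0$, $\phi''(0)>0$, and $\phi'(t)\to -\infty$ as $t\to\infty$, the maximum is attained at a unique interior critical point $t^*>0$ characterized by $F'(t^*)=\delta\lambda t^*$. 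Because $v\leq 0$,
\[
0\leq F'(t)=\int_\R v(e^{tv}-1)\,dy\leq -\int_\R v\qquad\text{for all }t\geq 0,
\]
which bounds $t^*\leq -\int v/(\delta\lambda)$. Combined with the convexity inequality $F(t)\leq tF'(t)$ (valid for convex $F$ with $F(0)=0$) and the critical-point relation, this yields the closed-form estimate
\[
I_\beta(t^*v_\lambda)\leq\frac{(\int_\R v)^2}{2\,\delta\lambda^2}.
\]

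The right-hand side depends only on $v$ (fixed once and for all) and on the scalar quantity $\delta\lambda^2$. With the scaling above, $\delta\lambda^2$ is an explicit polynomial in $\beta$ and is bounded below by a positive constant on any compact subinterval of $(0,\sqrt{2})$, giving $c_\beta\leq C$ uniformly. The main technical subtlety lies in the choice $\lambda=\lambda(\beta)$: the negative quadratic part of $I_\beta$ must dominate, but not so aggressively that the relevant amplitudes $t^*$ collapse. Since Theorem \ref{th1} restricts to $\beta\in(0,\beta_\star)$ with $\beta_\star<\sqrt{2}$, a single test function $v$ together with a $\beta$-adjusted scaling $\lambda$ suffices to render $C$ independent of $\beta$ throughout this range.
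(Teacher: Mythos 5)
Your strategy (a straight ray through a rescaled negative bump, followed by an explicit maximization along the ray) is the same as the paper's, and your computations along the ray are correct, but the final uniformity claim fails. Any admissible scaling requires $\lambda^3\|v''\|_{L^2}^2<\beta^2\lambda\|v'\|_{L^2}^2$, hence $\lambda=O(\beta)$ and $\delta=\lambda\bigl(\beta^2\|v'\|_{L^2}^2-\lambda^2\|v''\|_{L^2}^2\bigr)=O(\beta^3)$; with your normalization $\delta\lambda^2$ is a fixed constant times $\beta^5$, so your bound $(\int_\R v)^2/(2\delta\lambda^2)$ blows up like $\beta^{-5}$ as $\beta\to 0^{+}$. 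The interval $(0,\beta_\star)$ is \emph{not} a compact subinterval of $(0,\sqrt2)$: it accumulates at $0$, which is exactly the regime the theorem addresses (all of Section 4 is an asymptotic analysis as $\beta\to 0$). Nor is this an artifact of your estimates: along the ray $t\mapsto tv_\lambda$ one has $F(t)=t\int_\R|v|+O(1)$ for large $t$, the maximizer sits at $t^*\approx\int_\R|v|/(\delta\lambda)\to\infty$, and the maximum is genuinely of order $\beta^{-5}$, so no sharpening of the inequalities rescues this family of paths near $\beta=0$.

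The paper's proof instead takes the endpoint $e$ of Lemma \ref{n3} to be a \emph{single} element independent of $\beta$ and then simply discards the negative term $-\frac{\beta^2}{2}\int_\R|u'|^2$: for fixed $e$, $\max_{\bar t\in[0,1]}I_\beta(\bar t e)\leq\max_{\bar t\in[0,1]}\bigl[\frac12\int_\R|(\bar t e)''|^2+\int_\R(e^{\bar t e}-\bar t e-1)\bigr]$, a quantity with no $\beta$ in it. Your $\beta$-adjusted $\lambda$ defeats precisely this mechanism by tying the test object to $\beta$. If you want to keep your quantitative analysis, you must either fix $e$ once and for all (citing the $\beta$-independence asserted in Lemma \ref{n3} and the monotonicity of $\beta\mapsto I_\beta(u)$), or concede that your argument only yields a bound that is locally uniform in $\beta$ on compact subsets of $(0,\sqrt2)$ --- which happens to suffice for the limiting argument with $\beta_n\to\beta>0$ in Section 4, but does not prove the lemma as stated.
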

\begin{proof} We have $I_{\beta}(e)<0.$ Define a path $\gamma: [0,1] \rightarrow H^2(\R)$ such that $\gamma_{1} (\bar{t})= \bar{t}e.$ Then
$$c_{\beta}\leq \max_{\bar{t}\in [0,1]} I_{\beta} (\gamma_{1}(\bar{t}))\leq C.$$ Hence $c_{\beta}$ is uniformly bounded.
\end{proof}
\begin{remark}\label{morse} Also $I_{\beta}''(u)$ can be expressed as $I_{d}-K$ where $I_{d}$ is the identity map and $K$ is a compact operator. Let $G= \{i_{d}\};$ the trivial group consisting of the identity element.
$I_{\beta}$ is a $G-$ invariant functional which satisfies Palais Smale condition \cite{SW}, for almost all $\beta\in (0, \sqrt{2}).$
 Choose $B=\{0,e\}$ and let $\mathcal{F}_{0}^{e}$ be the collection of all paths joining $0$ and $e.$ Then $\mathcal{F}_{0}^{e}$ is a homotopy stable family with boundary $B.$  Moreover, $\sup_{B} I_{\beta}< c_{\beta}.$ Hence by Lemma \ref{ge}, the solution $u_{\beta}$ found in \cite{SW} has Morse index at most one. Also note that $c_{\beta}$ is a decreasing function of $\beta.$\end{remark}

We summarize the results in the following theorem

\begin{theorem}\label{th2.1}
For almost all $ \beta \in (0, \sqrt{2})$, there exists a mountain-pass solution $ u_\beta$ of (\ref{a1}) such that

(1) $ 0 < c_\beta= I_\beta (u_\beta) < C$, where $C$ is independent of $\beta \in (0, \sqrt{2})$,

(2) $ u_\beta$ has Morse index at most one and $ u_\beta \in H^2 (\R)$,

(3) the following pointwise identity holds
\be\label{poho} u'(x) u'''(x)- \frac{(u''(x))^2}{2}+\frac{\beta^2}{2} (u'(x))^2+ e^{u(x)}-u(x)-1=0.\ee
\end{theorem}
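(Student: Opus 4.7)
The plan is to assemble parts (1) and (2) directly from the preceding lemmas, and to derive (3) as a first integral of the underlying ODE. For the upper bound in (1), one has $c_\beta \le C$ by Lemma \ref{mpcb} (obtained by evaluating $I_\beta$ along the linear path $\bar t \mapsto \bar t e$, with $e$ chosen independently of $\beta$ as in Lemma \ref{n3}). For strict positivity, note that $I_\beta(e)<0$ while $I_\beta\ge 0$ on $\overline{B_r(0)}$ by Lemma \ref{n2}, so $e\notin\overline{B_r(0)}$; by connectedness every admissible path $\gamma\in\Gamma$ must cross the sphere $\partial B_r(0)$, on which $I_\beta\ge cr^2>0$. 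Taking the infimum over $\gamma$ yields $c_\beta\ge cr^2>0$.

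For (2), I would invoke the result of Smets--van den Berg that $I_\beta$ satisfies the Palais--Smale condition at level $c_\beta$ for almost all $\beta\in(0,\sqrt{2})$, producing a critical point $u_\beta\in H^2(\R)$ with $I_\beta(u_\beta)=c_\beta$. The Morse index bound then follows from Lemma \ref{ge} applied with $G=\{i_d\}$, $B=\{0,e\}$, and the one-dimensional homotopy-stable family $\mathcal F_0^e$ of paths joining $0$ to $e$ (so $D=[0,1]$, $D_0=\{0,1\}$, $N=1$), as outlined in Remark \ref{morse}. The required separation $\sup_B I_\beta = \max\{0,I_\beta(e)\}=0 < c_\beta$ has just been verified, and the Fredholm property $I_\beta''(u_\beta)=\mathrm{Id}-K$ with $K$ compact is recorded in Remark \ref{morse}. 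Lemma \ref{ge} then yields Morse index at most one.

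For the pointwise identity (3), I would obtain it as a conservation law of the ODE $u''''+\beta^2 u''+V_u(u)=0$ with $V(u)=e^u-u-1$. Multiplying by $u'$ and rewriting each term as an exact derivative via $u''''u'=(u'u''')'-\tfrac{1}{2}((u'')^2)'$, $\beta^2 u''u'=\tfrac{\beta^2}{2}((u')^2)'$, and $V_u(u)u'=(V(u))'$, one obtains
\be\no
\frac{d}{dx}\Bigl[\,u'u''' - \tfrac{1}{2}(u'')^2 + \tfrac{\beta^2}{2}(u')^2 + V(u)\,\Bigr] = 0,
\ee
so the bracketed expression is a constant along $\R$. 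To identify this constant as zero, I would combine $u_\beta\in H^2(\R)$ with the equation and elliptic regularity: Sobolev embedding gives $u_\beta,u_\beta'\to 0$ at $\pm\infty$, hence $V(u_\beta)\to 0$; a bootstrap through $u''''=-\beta^2 u''-V_u(u)$ then yields pointwise decay of $u_\beta''$ and $u_\beta'''$ along suitable sequences $x_n\to\pm\infty$. Evaluating the conserved quantity along such a sequence gives the constant $=0$, which is exactly (\ref{poho}).

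The main obstacle is precisely this last decay step, as $H^2(\R)$ membership does not directly yield pointwise decay of the higher derivatives. I would handle it by first using the ODE to bootstrap $u_\beta\in H^4(\R)$ (the right-hand side $-\beta^2 u_\beta''-(e^{u_\beta}-1)$ lies in $L^2(\R)$ since $u_\beta\in L^2\cap L^\infty$), and then exploiting the uniform continuity of $u_\beta''$ and $u_\beta'''$ together with their $L^2$ integrability to extract sequences $x_n\to\pm\infty$ on which both quantities tend to zero. This suffices to pin down the integration constant and complete the derivation of (\ref{poho}).
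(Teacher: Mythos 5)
Your proposal is correct and follows essentially the same route as the paper: part (1) from Lemmas \ref{n2}--\ref{mpcb} and the standard crossing argument, part (2) from the Smets--van den Berg Palais--Smale result combined with Lemma \ref{ge} exactly as in Remark \ref{morse}, and part (3) by multiplying the equation by $u'$ and integrating from $-\infty$ to $x$. Your extra care in justifying that the integration constant vanishes (bootstrapping to $H^4$ and extracting decay of $u''$, $u'''$) fills in a step the paper leaves implicit, but it is the same argument.
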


We will call (\ref{poho}) a kind of Pohozaev identity which  follows by multiplying (\ref{a1}) with $u'$ and then integrating in $(-\infty, x)$.

\section{Key Inequalities}
In this section, we prove the following key inequalities which will be used to bound the part where $u$ is large.
\begin{lemma} \label{l1}
Let $k_1>1$ be such that
\begin{equation}
k_1^2 -1 -k_1 -\sqrt{k_1^2-1} =0.
\end{equation}
Then we have
\begin{equation}
\label{K1}
\int_{-a}^a (u^{''})^2-\beta^2 \int_{-a}^a (u^{'})^2 + \frac{k_1^2 \beta^4 }{4} \int_{-a}^a u^2 \geq 0
\end{equation}
for all $ u \in H^2 (-a, a)$ and $ u(-a)= u(a)$.
\end{lemma}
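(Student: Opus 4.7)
My plan is to reduce the inequality via integration by parts and completing the square on $h=u-c$ (where $c=u(\pm a)$), and then verify the reduced inequality through a Fourier expansion.

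With $c:=u(-a)=u(a)$ and $h:=u-c$, the function $h\in H^{2}(-a,a)$ satisfies $h(\pm a)=0$, so $\int h\,h''=-\int(h')^{2}$ without boundary contribution. Completing the square with weight $\beta^{2}/2$ and using $\int u^{2}=\int h^{2}+2c\int h+2ac^{2}$, the boundary terms $\beta^{2}c(u'(a)-u'(-a))$ produced by integration by parts cancel cleanly, and the left-hand side of \eqref{K1} rewrites as the quadratic form
\[
Q(h,c):=\int\!\Bigl(h''+\frac{\beta^{2}}{2}h\Bigr)^{2}+\frac{(k_{1}^{2}-1)\beta^{4}}{4}\int h^{2}+\frac{k_{1}^{2}\beta^{4}}{2}\,c\!\int h+\frac{k_{1}^{2}\beta^{4}a}{2}\,c^{2}.
\]
Thus the lemma is equivalent to $Q(h,c)\ge 0$ for every $c\in\mathbb{R}$ and every $h\in H^{2}(-a,a)$ with $h(\pm a)=0$.

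Regarded as a quadratic in $c$, the discriminant condition reduces $Q(h,c)\ge 0$ to
\[
\int\!\Bigl(h''+\frac{\beta^{2}}{2}h\Bigr)^{2}+\frac{(k_{1}^{2}-1)\beta^{4}}{4}\int h^{2}\;\geq\;\frac{k_{1}^{2}\beta^{4}}{8a}\Bigl(\int_{-a}^{a}h\Bigr)^{2},\qquad(\star)
\]
which splits under the even--odd decomposition of $h$. The odd part has $\int h_{o}=0$, making $(\star)$ manifest; for the even part I expand in the orthogonal cosine basis $\phi_{k}(x)=\cos((2k+1)\pi x/(2a))$, $k\ge 0$, which consists of even $H^{2}$-functions vanishing at $\pm a$. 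Parseval combined with Cauchy--Schwarz then converts $(\star)$ into the scalar bound
\[
\sum_{k\ge 0}\frac{1}{(2k+1)^{2}\bigl[(\omega_{k}^{2}-\beta^{2}/2)^{2}+(k_{1}^{2}-1)\beta^{4}/4\bigr]}\;\leq\;\frac{\pi^{2}}{2\,k_{1}^{2}\,\beta^{4}},\qquad\omega_{k}=\frac{(2k+1)\pi}{2a},
\]
required uniformly in $a>0$.

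The critical configuration is $a=\pi/(\beta\sqrt{2})$, for which $\omega_{0}^{2}=\beta^{2}/2$ so that the $k=0$ denominator attains its minimum $(k_{1}^{2}-1)\beta^{4}/4$ and the sum is largest; balancing the dominant $k=0$ contribution against the higher-mode corrections leads to saturation of the bound precisely at the algebraic relation $k_{1}^{2}-1-k_{1}-\sqrt{k_{1}^{2}-1}=0$ defining $k_{1}$. The main obstacle is this final Fourier-sum analysis: careful estimation of the higher-mode contributions is needed to match the stated algebraic condition for the critical $k_{1}$, and to verify the bound uniformly in $a$ rather than only at the worst configuration.
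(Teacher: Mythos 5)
Your reduction is correct and is genuinely different from the paper's route: the paper splits off the case $u(\pm a)=0$ (quoting Bonheure), rescales to $u(\pm a)=1$, solves the Euler--Lagrange equation $u''''+2u''+k^2u=0$ with natural boundary conditions $u''(\pm a)=0$ in closed form ($A\cosh\lambda x\cos\mu x+B\sinh\lambda x\sin\mu x$ with $\lambda^2-\mu^2=-1$, $4\lambda^2\mu^2=k^2-1$), and evaluates the minimum as the boundary term $-2u(a)(u'''(a)+2u'(a))$, from which the sign condition $k^2-k-1-\sqrt{k^2-1}\ge 0$ drops out exactly. Your substitution $h=u-c$, the completion of the square, the discriminant step in $c$, and the passage to the Navier eigenbasis $\cos((2k+1)\pi x/(2a))$ are all valid (the term-by-term identities $\int (h'')^2=a\sum\omega_k^4c_k^2$ hold because $h(\pm a)=\phi_k(\pm a)=0$ kills the boundary terms), and since Cauchy--Schwarz is saturated by $c_k\propto\gamma_k/d_k$, your scalar bound is in fact \emph{equivalent} to the lemma, not merely sufficient.

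The genuine gap is that you stop exactly where the content of the lemma lives, and the step you defer is not a routine verification. You assert, without derivation, that the Fourier sum saturates ``precisely at the algebraic relation $k_1^2-1-k_1-\sqrt{k_1^2-1}=0$.'' There is no reason for that: your route produces a transcendental condition built from $\pi^2$ and an infinite series, whereas the paper's condition comes from the roots $\lambda\pm i\mu$ of $r^4+2r^2+k^2=0$. Concretely, keeping only the $k=0$ mode at your critical configuration $\omega_0^2=\beta^2/2$ already forces $4/(k_1^2-1)\le \pi^2/(2k_1^2)$, i.e.\ $k_1^2\ge \pi^2/(\pi^2-8)$, so $k_1\ge 2.2976\ldots$, while the root of $k^2-k-1-\sqrt{k^2-1}=0$ is $k_1\approx 2.332$; the higher modes contribute about $0.0068\beta^{-4}$ against a remaining margin of about $0.0063\beta^{-4}$, so the two sides agree to three digits and the inequality is, numerically, borderline or marginally violated at $k=k_1$. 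Thus you have no slack: crude tail estimates cannot close the argument, the claim that $a=\pi/(\beta\sqrt 2)$ is the worst configuration is itself unproved (the supremum over $a$ of your sum need not occur where $d_0$ is minimized), and it is not even clear that your scalar bound holds at the paper's $k_1$. Until the sum is controlled uniformly in $a$ \emph{and} shown to match the stated constant, the proposal does not prove the lemma.
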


\begin{proof} First, we note that  the following inequality holds
\begin{equation}
\label{K1-1}
\int_{-a}^a (u^{''})^2-\beta^2 \int_{-a}^a (u^{'})^2 + \frac{ k^2 \beta^4 }{4} \int_{-a}^a u^2 \geq 0
\end{equation}
for all $k\geq 1,   u \in H^2 (-a, a), u(-a)= u(a)=0$. See Lemma 5 of \cite{DB}.

Hence we may assume that $ u(-a)=u(a) \not =0 $. Then by rescaling, we may assume that $ \beta^2=2 $ and $ u(\pm a)=1$. We consider the following minimization problem
\be \label{K1-2}
 M_a^{}= \min_{ u \in H^2 (-a, a), u(\pm a)=1 } \int_{-a}^{a} (u^{''})^2 -2 \int_{-a}^{a}(u^{'})^2 + k^2 \int_{-a}^{a} u^2.\ee
Using the inequality (\ref{K1-1}), it is easy to see that the minimizer in (\ref{K1-2}) exists and satisfies
\begin{equation}
  \label{ki1}
\left\{\begin{aligned}
      u''''+ 2 u''+ k^2 u  &= 0 &&\text{in } (-a, a),\\
      u(\pm a) = 1 , u''(\pm a)&=0.
    \end{aligned}
  \right.
\end{equation}
We can assume that $u$ is even since by (\ref{K1-1}),  the solution to
\begin{equation}
  \label{ki1-2}
\left\{\begin{aligned}
      u''''+ 2 u''+ k^2 u  &= 0 &&\text{in } (-a, a)\\
      u(\pm a) = 0 , u''(\pm a)&=0
    \end{aligned}
  \right.
\end{equation}
is zero, if $ k>1$. From (\ref{ki1}), we conclude
\be u(x)= A\cosh \lambda x \cos\mu x + B \sinh \lambda x \sin\mu x \ee
where $r= \lambda + i\mu $ are the roots of $r^4+ 2 r^2+k^2=0.$
Then the minimum can be computed \be\label{ki2}  M_a= 2\bigg[\int_{0}^a (u^{''})^2 -2 \int_{0}^a (u^{'})^2 + k^2 \int_{0}^a u^2\bigg]= -2 u(a) (u'''(a)+ 2 u'(a)). \ee
In order to show that $M_a \geq 0$ we proceed to calculate $A, B.$

First we compute the derivatives of $u$;
$$u'(x)= (\lambda A+ \mu B) \sinh \lambda x \cos\mu x+ ( B\lambda -\mu A) \cosh \lambda x \sin\mu x,$$
\bea \no u''(x)&=& ((\lambda^2- \mu^2) A+ 2\lambda\mu B) \cosh \lambda x \cos\mu x+ ( B(\lambda^2-\mu^2) -2\mu \lambda A) \sinh \lambda x \sin\mu x,\eea
\bea \no u'''(x) &=& (\lambda(\lambda^2- \mu^2) A+ 2\lambda^2\mu B+\mu B(\lambda^2-\mu^2)-2 A \lambda \mu^2) \sinh \lambda x \cos\mu x\\\no&+& ( \lambda B(\lambda^2-\mu^2) -2 A \lambda^2\mu-2\mu (\lambda^2-\mu^2) A- 2B \lambda \mu^2) \cosh \lambda x \sin\mu x.\eea
Using $u''(a)=0$ we have
\bea u''(a)&=& ((\lambda^2- \mu^2) A+ 2\lambda\mu B) \cosh \lambda a \cos\mu a\no \\&+& ( B(\lambda^2-\mu^2) -2\mu \lambda A) \sinh \lambda a \sin\mu a\eea
and from $u(a)=1$ we get
\begin{equation}
  \label{ki11}
\left\{\begin{aligned}
      A \cosh \lambda a \cos\mu a+ B \sinh \lambda a \sin\mu a &= 1 \\
      2B \lambda \mu \cosh \lambda a \cos\mu a- 2 A \lambda \mu \sinh \lambda a \sin\mu a&= (\mu^2-\lambda^2)
    \end{aligned}
  \right.
\end{equation}
which implies that
\be A \cosh \lambda a \cos\mu a+ B \sinh \lambda a \sin\mu a = 1\ee
\be - A \sinh \lambda a \sin\mu a+ B  \cosh \lambda a \cos\mu a=\frac{\mu^2-\lambda^2}{2\lambda \mu}.\ee
By simple computations  we obtain
$$A=\frac{\cosh \lambda a \cos\mu a- \frac{\mu^2-\lambda^2}{2\lambda \mu} \sinh \lambda a \sin \mu a}{\cosh^2 \lambda a \cos\mu^2 a+ \sinh^2 \lambda a \sin^2 \mu a }$$ and
$$B =\frac{\sinh \lambda a \sin\mu a+ \frac{\mu^2-\lambda^2}{2\lambda \mu} \cosh \lambda a \cos \mu a}{\cosh^2 \lambda a \cos\mu^2 a+ \sinh^2 \lambda a \sin^2 \mu a }.$$
Now
\bea\label{z1} u'''(a)+ 2 u'(a) &=& A (\lambda(\lambda^2-\mu^2)- 2\lambda \mu^2+ 2\lambda) \sinh \lambda a \cos \mu a \no \\&+&
B (\mu (\lambda^2-\mu^2)+ 2\lambda^2\mu+ 2\mu) \sinh \lambda a \cos \mu a\no \\&+&
A (-2\lambda^2\mu-\mu (\lambda^2-\mu^2)-  2\mu) \cosh \lambda a \sin \mu a\no \\&+&
B (-2\lambda\mu^2+\lambda (\lambda^2-\mu^2)+  2\lambda) \cosh \lambda a \sin \mu a.\eea
As a result we have
\bea&& u'''(a)+ 2 u'(a)\\\no &=&\frac{1}{4\lambda}[2\lambda^2(\lambda^2-\mu^2)- 4 \lambda^2\mu^2+ 4\lambda^2 + (\lambda^2-\mu^2+ 2\lambda\mu+2)(\mu^2-\lambda^2)]\sinh 2\lambda a \\\no &+&\frac{1}{4\mu}[ (\lambda^2-\mu^2- 2\lambda\mu+2)(\mu^2-\lambda^2)- 2 \mu (2\lambda^2\mu +\mu(\lambda^2-\mu^2)+ 2\mu)]\sin 2\mu a \\\no &=&\frac{1}{4\lambda}[(\lambda^2+ \mu^2)(\lambda^2-\mu^2)- 4 \lambda^2\mu^2+ 4\lambda^2 + (2 \mu \lambda+2)(\mu^2-\lambda^2)]\sinh 2\lambda a \\\no &-&\frac{1}{4\mu}[ (\lambda^2+ \mu^2)(\lambda^2-\mu^2)- 4 \lambda^2\mu^2+ 4\mu^2 + (2 \mu \lambda-2)(\mu^2-\lambda^2)]\sin 2\mu a.\eea
Since $r= \lambda+ i\mu$ is a root of $r^{4}+ 2r^{2}+ k^2=0, $  we have
\be\label{z2} (\lambda^2-\mu^2)^2- 4\lambda^{2}\mu^2+ 2(\lambda^2-\mu^2)+ k^2=0.\ee
Let $(\lambda^2-\mu^2)=-1$. Then from (\ref{z2}) we have $4\lambda^2 \mu^2= k^2-1$ and hence
$(\lambda^2+ \mu^2)^2= k^2.$ Hence from (\ref{z1}) we have
\be \label{z3} -(u'''(a)+ 2 u'(a))=- \frac{1}{4}(k-k^2+1+ \sqrt{k^2-1})\bigg[\frac{1}{\lambda}\sinh 2\lambda a-\frac{1}{\mu} \sin 2\mu a\bigg].\ee
Now we determine the sign of $(\sinh 2\lambda a-\frac{\lambda}{\mu} \sin 2\mu a)$.
Note that we have $\lambda^2+ \mu^2=k$ and  $\lambda^2- \mu^2=-1.$ Hence we have
$\mu=\sqrt{\frac{k+1}{2}}$ and $\lambda=\sqrt{\frac{k-1}{2}}.$\\
Let $x= 2\mu a.$ Then we have
$$ \sinh 2\lambda a-\frac{\lambda}{\mu} \sin 2\mu a =\sinh \frac{\lambda }{\mu} x- \frac{\lambda }{\mu}\sin x.$$
But we know that
$$\sinh \frac{\lambda }{\mu} x> \frac{\lambda }{\mu}x> \frac{\lambda }{\mu}\sin x \, \,  \forall\, \,  x. $$
Hence
$$M_a=-2u(a)(u'''(a)+ 2 u'(a))\geq 0$$ provided $k^2-k-1-\sqrt{k^2-1}\geq 0, k>1.$
This proves (\ref{K1}).
\end{proof}

\begin{lemma}\label{l2}
Let $k_2$ be such that
\begin{equation}
4 k_2^2 -2 k_2 -3=0, k_2 >1.
\end{equation}
Then we have
\begin{equation}
\label{K2}
\int_{a}^{\infty} (u^{''})^2-\beta^2 \int_{-a}^{\infty} (u^{'})^2 + \frac{k_2^2 \beta^4 }{4} \int_{a}^{\infty} u^2 \geq 0
\end{equation}
for all $ u \in H^2 (a, +\infty)$.
\end{lemma}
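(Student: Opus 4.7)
The argument parallels that of Lemma \ref{l1}, now adapted to the half-line $[a, +\infty)$. First I would translate $a$ to $0$ and rescale $v(y) = u(y\sqrt{2}/\beta)$ to reduce the claim to showing $I(v) := \int_0^\infty (v'')^2 - 2(v')^2 + k_2^2 v^2 \geq 0$ for every $v \in H^2(0, +\infty)$.

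The case $v(0) = 0$ is handled by odd reflection: setting $\tilde v(x) = -v(-x)$ for $x < 0$ produces a function in $H^2(\R)$ (the vanishing of $v$ at $0$ forces $\tilde v \in C^1$ across the origin, and the weak second derivative is in $L^2$ by the $H^2$-hypothesis on the half-line). Plancherel combined with the pointwise bound $\xi^4 - 2\xi^2 + k_2^2 = (\xi^2 - 1)^2 + (k_2^2 - 1) \geq 0$ (valid since $k_2 > 1$) then yields $I(v) \geq 0$ in this case.

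For $v(0) \neq 0$ I would construct the explicit auxiliary function $v_*$ solving the linear ODE $v^{''''} + 2 v'' + k_2^2 v = 0$ on $(0, \infty)$ with $v_*(0) = v(0)$, $v_*''(0) = 0$, and $v_* \in H^2$. The two $H^2$-decaying solutions of the ODE have the explicit form $e^{-\lambda x}\cos\mu x$ and $e^{-\lambda x}\sin\mu x$ with $\lambda = \sqrt{(k_2-1)/2}$, $\mu = \sqrt{(k_2+1)/2}$ (dictated by the decaying roots of $r^4 + 2r^2 + k_2^2 = 0$), so $v_*(x) = e^{-\lambda x}(A\cos\mu x + B\sin\mu x)$, and the two prescribed conditions determine $A$ and $B$ uniquely. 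Writing the given $v$ as $v_* + w$ with $w(0) = 0$ and expanding the quadratic form, $I(v) = I(v_*) + 2B(v_*, w) + I(w)$; integration by parts using $Lv_* = 0$ and $v_*''(0) = 0 = w(0)$ makes the cross-term $B(v_*, w)$ vanish, and $I(w) \geq 0$ by the first case. Hence $I(v) \geq I(v_*)$.

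The remaining task is to evaluate $I(v_*)$. The same integration by parts, together with $Lv_* = 0$ and $v_*''(0) = 0$, collapses $I(v_*)$ to the single boundary quantity $[v_*'''(0) + 2 v_*'(0)]\,v_*(0)$. The hard part is now purely algebraic: one must compute $v_*'''(0) + 2 v_*'(0)$ explicitly from the formula for $v_*$---closely paralleling the computation (\ref{z1})--(\ref{z3}) in Lemma \ref{l1}, but with the cosh/sinh, cos/sin structure of the finite interval replaced by the exponential decay factor $e^{-\lambda x}$ forced by $v_* \in H^2$---and then verify that the resulting rational expression in $k_2$ has the same sign as the polynomial $4k_2^2 - 2 k_2 - 3$. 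The hypothesis $4k_2^2 - 2k_2 - 3 = 0$ (together with the monotonicity of the sign in $k$) gives $I(v_*) \geq 0$ for $k \geq k_2$, and combining this with the $v(0) = 0$ case yields (\ref{K2}).
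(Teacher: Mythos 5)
Your proposal is correct and follows essentially the same route as the paper: reduce by translation/rescaling to the half-line problem with $\beta^2=2$, identify the extremal as the decaying solution $e^{-\lambda x}(A\cos\mu x+B\sin\mu x)$ of $v''''+2v''+k^2v=0$ with the natural boundary condition $v''(0)=0$, and evaluate the quadratic form as the boundary term $v_*(0)\left(v_*'''(0)+2v_*'(0)\right)$, whose sign is controlled by $4k^2-2k-3$. Your orthogonal decomposition $v=v_*+w$ with the vanishing cross-term is just a more explicit rendering of the paper's appeal to the minimizer, and both arguments defer the same final algebraic computation of $v_*'''(0)+2v_*'(0)$.
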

\begin{proof}  As before, we may assume that $a=0, u(0)=1, \beta^2=2$. Using the inequality (\ref{K1-1}), it is easy to see that  the minimizer  for the problem
\be
\label{M1.1}
 M_a^{'}= \min_{ u \in H^2 (0, \infty), u(0)=1 } \int_{0}^\infty (u^{''})^2 -2 \int_{0}^\infty (u^{'})^2 + k^2 \int_{0}^\infty u^2
\ee
exists and satisfies
\begin{equation}
  \label{ki1-3}
\left\{\begin{aligned}
      u''''+ 2 u''+ k^2 u  &= 0 &&\text{in } (0, +\infty)\\
      u(0) = 1 , u''( 0)&=0.
    \end{aligned}
  \right.
\end{equation}
Hence
\be u(x)= Ae^{-\lambda x}\cos\mu x + B e^{-\lambda x} \sin\mu x \ee
where $r= -\lambda + i\mu $ are the roots of $r^4+ 2 r^2+k^2=0.$ Similar computations as in Lemma \ref{l1} give
$$u'''(0)+ 2 u'(0)\geq 0$$ if
$4k^2-2k-3\geq  0, k>1.$
\end{proof}
\begin{remark}\label{r1} Similar results as Lemma \ref{l2} holds  for $u\in H^2(-\infty, -a).$ Define $k_0 = \max\{k_1, k_2\}$. It is easy to see that $k_0=k_1 \approx 1.62\cdots$.
\end{remark}

\section{proof of Theorem \ref{th1}}
From Theorem \ref{th2.1},  we have \be\label{up}0< I_{\beta}(u_{\beta})= \frac{1}{2}\int_{\R} |u_{\beta}''|^2dx-\frac{\beta^2}{2} \int_{\R} |u_{\beta}'|^2dx+  \int_{\R} (e^{u_{\beta}}-u_{\beta}-1)dx < C\ee  where $C>0$ is independent of $\beta.$

Let $\beta \in (0, \sqrt{2})$ be fixed. By Theorem \ref{th2.1}, there exists a sequence $\beta_n \to \beta$ and a sequence of   solutions of (\ref{a1}), called $ u_{\beta_n}$, with Morse index at most one  and the bound (\ref{up}). Our main idea is to show that the limit of $ u_{\beta_n}$ exists and has uniform $H^2$ bound.

We will drop the subscript $\beta$ for the sake of convenience.

By simple computations,  it is easy to check that the function
$\frac{e^{u}-u-1}{u^2}$ is increasing if   $ u <0$.  Thus there is a unique  $u_{\star}<0$ such that
\be \label{ustar}
\frac{e^{u_{\star}}-u_{\star}-1}{u_{\star}^2}=\frac{\beta^4k_{0}^2}{8}.\ee
Since $ e^{u}-1-u \geq \frac{u^2}{2}$ for $u>0$, we deduce that
\be \frac{e^{u}-u-1}{u^2}\geq \frac{\beta^4 k_{0}^2}{8} \text { for } u\geq u_{\star}\ee
if
\begin{equation}
\label{beta1}
\beta \leq \frac{\sqrt{2}}{\sqrt{k_0}}.
\end{equation}
Also  we have
\begin{equation}
\label{negative}
e^{u_{\star}}\geq 1+ u_{\star}+ \frac{u_{\star}^2}{2}e^{u_{\star}}.
\end{equation}
This implies that $$e^{u_{\star}}\leq \frac{\beta^4k_0 ^2}{4}.$$

Our main idea is to  bound the energy on the level sets $\{ u \geq u_{\star} \}$ and $\{ u \leq u_{\star} \}$.   On the set $\{ u \geq u_{\star} \}$, we use the key inequality (\ref{K1}). On the set $\{ u \leq u_{\star} \}$, we will use Morse index information obtained in Theorem \ref{th2.1}.\\
First, as a result of Remark \ref{r1} and the key inequalities (\ref{K1}) and (\ref{K2})  we have
\bea &&\frac{1}{2}\int_{A} |u''|^2dx-\frac{\beta^2}{2} \int_{A} |u_{}'|^2dx+  \int_{A} (e^{u}-u-1)dx\no\\&\geq & \frac{1}{2}\int_{A} |u_{}''|^2dx-\frac{\beta^2}{2} \int_{A} |u'|^2dx+  \frac{\beta^4 k_0^2}{4}\int_{A} \frac{u^2}{2} dx \geq 0\eea
where $A=\{u\geq u_{\star}\}.$ (Note that since $ u$ is a homoclinic, $ A= (-\infty, b_0)\cup \displaystyle{\cup_{j=1}^{l} (a_j, b_j)}\cup (a_{l+1}, +\infty)$.)

Our main objective is then to  show that in the complement of $A^{c}=\{u\leq u_{\star}\},$
\be\label{m1} \frac{1}{2}\int_{A^{c}} |u''|^2dx-\frac{\beta^2}{2} \int_{A^{c}} |u_{}'|^2dx+  \int_{A^{c}} (e^{u_{}}-u_{}-1)dx\geq 0.\ee
Let $A^{c}=\{u\leq u_{\star}\}=\cup_{j=1}^{m}(a_{j}, b_{j})$ where $m$ is finite since $u$ is homoclinic.  Since Morse index of $u$ is at most one, then except at most one interval $(a_{i}, b_{i})$ we must have, for $j\neq i$
\be\label{m2} \int_{a_{j}}^{ b_j} |\varphi''|^2dx-\beta^2 \int_{a_{j}}^{ b_j} |\varphi'|^2dx+  \int_{a_{j}}^{ b_j} e^{u}\varphi^2 dx\geq 0 \, \,\,  \forall \varphi\in C^{2}_{0} (a_j, b_j).\ee
Without loss of generality let (\ref{m2}) hold in  some interval $(a, b).$

As $e^{u}$ is an increasing function, in $A^{c}$,  we have
\begin{equation}
\label{n1}
e^{u}\leq e^{u_{\star}}.
\end{equation}
Note that from (\ref{m2}) we have
\be\label{m3} \int_{a }^{ b} |\varphi''|^2dx-\beta^2\int_{a}^{ b} |\varphi'|^2dx+  e^{u_{\star}}\int_{a}^{ b} \varphi^2 dx\geq 0 \, \,\,  \forall \varphi\in C^{2}_{0} (a, b).\ee

(\ref{m3}) implies that the length of the interval $(b-a)$ can be controlled. In fact, we will have
\begin{equation}
\label{astar}
\frac{b-a}{2} \leq a_{\star}
\end{equation}
where $a_{\star}$ depends on $\beta$.\\
On the other hand, if $v=u-u_{\star},$ then we have
\be\label{res} v''''+ \beta^2 v''+ e^{v+ u_{\star}}-1=0.\ee
Multiplying by $v$ and integrating (\ref{res}) we obtain
\be \label{res2} -v'v''\mid_{a}^{b}+\int_{a}^{b} (v'')^2-\beta^2\int_{a}^{b} (v')^2+\int_{a}^{b}(e^{u_{\star}+ v}-1)v=0.\ee

Integrating (\ref{poho}) we have
\be \label{poho1-1} v'v''\mid_{a}^{b}-\frac{3}{2}\int_{a}^{b} (v'')^2+\frac{\beta^2}{2}\int_{a}^{b} (v')^2+\int_{a}^{b}(e^{u_{\star}+ v}-1-u_{\star}-v)=0.\ee
Adding (\ref{res2}) and (\ref{poho1-1}) we obtain that
\be \label{id1} \int_{a}^{b} (v'')^2+ \beta^2 (v')^2= 2\int_{a}^{b}(e^{u_{\star}+ v}-1)v+ 2 \int_{a}^{b}(e^{u_{\star}+ v}-u_{\star}-v-1)dx.\ee
Substituting (\ref{id1}) into the energy over $(a,b)$ we have
\bea\label{rese} I_{\beta}\mid_{(a, b)}(u)&=& \frac{1}{2}\int_{a}^{b} (u'')^2- \frac{\beta^2}{2}\int_{a}^{b} (u')^2+ \int_{a}^{b} (e^u-u-1)dx\no\\&=&\int_{a}^{b} (v'')^2- \int_{a}^{b}(e^{u_{\star}+ v}-1)v.\eea
Now we claim that
$$I_{\beta}\mid_{(a, b)}(u)\geq 0.$$
We argue by contradiction. If not, then $I_{\beta}\mid_{(a, b)}(u)\leq 0.$
We have
\be \label{1.19} \int_{a}^{b} (v'')^2\leq \int_{a}^{b}(e^{u_{\star}+ v}-1)v\leq \int_{a}^{b}(-v).\ee
Without loss of generality we can consider $(a,b)$ to be $(-a, a)$.
Let
\[ A=\int_{-a}^{a} (e^{u_{\star}+v}-1) v, \ B= \int_{-a}^{a} (e^{u_{\star}+v}-1- u_{\star}-v), \  \sigma:= \frac{A}{B}\]
Then from (\ref{id1})  we have
\be \int_{-a}^{a} (u'')^2+ \beta^2 \int_{-a}^{a} (u')^2\leq 2\bigg(1+ \sigma \bigg) \int_{-a}^{a} (e^{v+ u_{\star}}-u_{\star}-v-1).\ee
Thus we have from (\ref{rese})
\bea\label{1.27-1} I_{\beta}\mid_{(-a,a)}(u)&\geq& \frac{1}{2}\bigg\{ \bigg(1+ \frac{1}{1+ \sigma }\bigg)\int_{-a}^{a} (u'')^2-\beta^2 \bigg(1- \frac{1}{1+\sigma}\bigg)\int_{-a}^{a} (u')^2  \bigg\}\no\\&\geq & \frac{1}{2}\frac{1}{(1+ \sigma)}\bigg\{ \bigg(2+ \sigma\bigg)\int_{-a}^{a} (u'')^2- \sigma \beta^2 \int_{-a}^{a} (u')^2  \bigg\}\\\no&\geq& c_{0} \bigg\{  \int_{-a}^{a} (u'')^2-  \frac{\sigma}{2+\sigma}  \beta^2\int_{-a}^{a} (u')^2 \bigg\}.\eea
As a consequence, if $I_{\beta}\mid_{(-a,a)}(u)\leq 0,$ then from (\ref{1.27-1}) we obtain that
\be \label{1.28-1}  \int_{-a}^{a} (u'')^2\leq \frac{\sigma}{2+\sigma} \beta^2 \int_{-a}^{a} (u')^2dx .\ee

Consider the following eigenvalue problem
\begin{equation}
  \label{1.29}
  \left\{
    \begin{aligned}
    u''''+ \lambda^2 u''&= 0\; &&\text{in } (-a, a)\\
 u= u''&= 0 &&\text{ on } \pa (-a, a).
\end{aligned}\right.
\end{equation}
By (\ref{1.28-1}) the first eigenvalue $\lambda_1^2$  of (\ref{1.29}) satisfies $\lambda_1^2\leq \frac{\sigma  \beta^2}{2+\sigma }.$ But note that $u= A\cos (\lambda_1 x),$ with
$\cos \lambda_1 a=0$ implies that $ \lambda_{1} a=\frac{\pi}{2}.$ This implies that $\lambda_1^2 a^2= \frac{\pi^2}{4}$
and hence $\frac{\pi^2}{4}\leq \frac{\sigma}{2+\sigma}  \beta^2 a^2.$\\
As a result we obtain that
\be\label{1.30-1} \beta a \geq \frac{\pi}{2} \sqrt{1+\frac{2}{\sigma}}.\ee
To estimate $\sigma$, let us notice that
\be\label{zs} \inf_{H^{2}(I)\cap H^{1}_{0}(I)} \frac{\int_{-a}^{a} (v'')^2}{(\int_{-a}^{a} v)^2 }=\frac{15}{4 a^{5}}.\ee
In order to prove (\ref{zs}) consider the problem
\begin{equation}
  \label{m5}
  \left\{
    \begin{aligned}
    u''''&= 1\; &&\text{in } (-a, a) \\
 u= u''&= 0 &&\text{ on } \pa (-a, a).
\end{aligned}\right.
\end{equation}
Then $$u(x)=\frac{1}{24}(x^{4}- a^{4})-\frac{1}{4}a^2(x^{2}- a^2)$$ and as a result we have
\be \int_{-a}^{a} u dx=\frac{1}{24}(6+ \frac{2}{5})a^{5}=\frac{4}{15} a^5\ee and
\be \int_{-a}^{a} (u'')^2 dx=\frac{1}{24}(6+ \frac{2}{5})a^{5}=\frac{4}{15} a^5.\ee
Hence we have
\be \label{1.21}\int_{-a}^{a} (v'')^2dx \geq \frac{15}{4 a^{5}}\bigg(\int_{-a}^{a} |v|\bigg)^2 .\ee
But from (\ref{1.19}) we have
\be \label{1.22} \int_{-a}^{a} (v'')^2dx \leq \int_{-a}^{a} (-v) \leq \int_{-a}^{a} |v|dx.\ee
This implies that
\be
\label{1.23}
A \leq  \int_{-a}^{a} |v|dx \leq \frac{4}{15} a^{5}.\ee
But
\be\no B \geq A+ (-1- u_{\star})2a \ee
and hence from (\ref{astar}) we have,
\be
\label{sigma}
\frac{1}{\sigma} \geq 1+ \frac{ 15(-1-u_{\star})}{2 a_{\star}^4}.
\ee
As a result of (\ref{1.30-1}) we have
\be \label{z5}\frac{\sqrt{\frac{15 (-1-u_{\star})}{a^{4}_{\star}}+ 3}}{2\beta}\pi\leq a_{\star}.\ee
So as long as
\be\label{1.44-1} \frac{\sqrt{\frac{15 (-1-u_{\star})}{a^{4}_{\star}}+ 3}}{2}\pi> \beta a_{\star} \ee
holds we have a contradiction with (\ref{astar}).\\
Next we show that condition (\ref{1.44-1}) holds when $\beta$ is small.
In fact, we have from (\ref{ustar}) that for small $\beta,$
\be\label{1.6} u_{\star}\approx-\frac{8}{\beta^4k_0^2}\ee
and hence $ e^{u_{\star}} \approx e^{-\frac{8}{\beta^4 k_0^2}} \approx 0$,  we may assume that $ e^{u_\star} =0$.
Hence we solve the eigenvalue problem
\begin{equation}
  \label{m41}
  \left\{
    \begin{aligned}
     \varphi^{''''}+ \beta^2 \varphi''  &= 0\; &&\text{in } (-a, a) \\
 \varphi= \varphi'&= 0 &&\text{ on } \pa (-a, a).\\
\end{aligned}\right.
\end{equation}
Any even eigenfunction of (\ref{m41}) is $1+ \cos \beta x$ provided that $\beta a= \pi.$
Hence from (\ref{m3}) and (\ref{m41}) we obtain
\be\label{1.11}
a \leq a_{\star}= \frac{\pi}{\beta} + O(\beta)
\ee
and since $ u_{\star} \approx -\frac{8}{\beta^4 k_0^2}$, we obtain
\be
\label{sigma1}
3+ \frac{ 15(-1-u_{\star})}{ a_{\star}^4} \geq  3+ \frac{120}{k_0^2 \pi^4} + O(\beta) \geq \frac{9}{2} + O(\beta)
\ee
which implies that (\ref{1.44-1}) holds for $\beta$ small.

We have thus proved that in $A^c$, except one interval,
\be I_{\beta} \mid_{(a, b)}\geq 0.\ee

Let $(a, b)$ be the exceptional interval in $A^c$. Then we  have
\be \beta (b-a)< 4\pi.\ee
In fact,  if $\beta (b-a)\geq 4\pi,$ then we can construct $\psi_1$ and $\psi_2$ having disjoint support such that
$$\left\{
    \begin{aligned}
   \psi_1(x)&= \cos \beta x \; &&\text{in } (-\frac{\pi}{2\beta}, \frac{\pi}{2\beta}) \\
 \psi_1(x)&= \psi_1''(x)= 0 &&\text{ on } \pa (-\frac{\pi}{2\beta}, \frac{\pi}{2\beta}),
\end{aligned}\right.
$$
$$\left\{
    \begin{aligned}
   \psi_2(x)&= \cos \beta x \; &&\text{in } (\frac{\pi}{2\beta}, \frac{3\pi}{2\beta}) \\
 \psi_2(x)&= \psi_2''(x)= 0 &&\text{ on } \pa (\frac{\pi}{2\beta}, \frac{3\pi}{2\beta})
\end{aligned}\right.
$$
and $\psi_1$ and $\psi_2$ contribute two to the Morse index of $u$, a contradiction to Theorem \ref{th2.1}.

From (\ref{id1}), we have
\be \label{id1-1} \int_{a}^{b} (v'')^2+ \beta^2 (v')^2 \leq C+ C \int_{a}^b |v|
\ee
which yields
\be
\label{1.32} |u| \leq |v|+ |u_{\star}| \leq C \text{ in } (a, b).\ee

Then from (\ref{up}) we have
\be
-C\leq  I_{\beta} \mid_{(a, b)}(u)
\leq C. \ee

Let $A^{'}= A \backslash (a, b)$. Then
\be\label{1.33} 0< \int_{A^{'}} (u'')^2 -\beta^2 \int_{A^{'}} (u')^2+ \int_{A^{'}} (e^{u}-u-1)dx \leq C\ee
and this implies that
\be\label{1.34}  \int_{A^{'}} (e^{u}-u-1)dx + \int_{A^{'}} ((u'')^2+ (u')^2+ u^2) \leq C\ee
and hence $|u|\leq C$ in $A$.
Multiplying (\ref{a1}) by $u$ and integrating we obtain
\be\label{1.35} \int_{\R} (u'')^2-\beta^2 \int_{\R} (u')^2+ \int_{\R}(e^{u}-1) u=0 .\ee
From (\ref{up}) and (\ref{1.35}) we have
\be \label{1.36}-\frac{1}{2}\int_{\R} (e^{u}-1) u+ \int_{\R} (e^{u}-u-1) dx< C\ee
and this implies
\be \label{1.37} \int\limits_{u<0}[ (e^{u}-u-1)-\frac{1}{2} (e^{u}-1) u] < C.\ee
Moreover, using (\ref{id1}) we obtain
\be \int_{\R} (u'')^2+ \beta^2 \int_{\R} (u')^2= 2\int_{\R}(e^{u}-1) u+ 2\int_{\R}(e^{u}-u-1) \leq C.\ee
This implies
\be \|u_{\beta}\|_{H^{2}(\R)}\leq C.\ee
Let $\beta\in (0, \sqrt{2})$ such that there exists $\beta_{n}\rightarrow \beta$ as $n\rightarrow \infty$ and
for $\beta=\beta_{n}$ (\ref{a1}) has a solution. Hence we have
$$u_{\beta_{n}}''''+ \beta_{n}^2 u_{\beta_{n}}''+ e^{u_{\beta_{n}}}-1=0.$$
Also we have $\|u_{\beta_{n}}\|_{H^{2}(\R)}\leq C$ and hence $u_{\beta_{n}}\rightharpoonup u_{\beta}$ in $H^{2}(\R)$ and as a
result, we have $u_{\beta_{n}}\rightarrow u_{\beta}$ in $L^{p}_{loc}(\R)$ as $n\rightarrow +\infty$ for all $p.$ In particular, $u_{\beta_{n}}\rightarrow u_{\beta}$ in $C^{1}_{loc} (\R).$   Hence $u_{\beta_{n}}(x)\rightarrow u_{\beta}(x)$ pointwise almost everywhere. Thus  $e^{u_{\beta_{n}}(x)}\rightarrow e^{u_{\beta}(x)}$ almost everywhere. As $n\rightarrow +\infty,$ we have
$$u_{\beta}''''+ \beta ^2 u_{\beta }''+ e^{u_{\beta }}-1=0.$$
Finally we prove that $u_{\beta}$ is nontrivial, i.e., $ u_{\beta} \not \equiv 0$.  As $u_{\beta_{n}}$ is a solution of  (\ref{a1}) from (\ref{1.35}) we have
\be \int_{\R} (u_{\beta_{n}}'')^2-\beta_{n}^2 \int_{\R} (u_{\beta_{n}}')^2=- \int_{\R} (e^{u_{\beta_{n}}}-1) u_{{\beta_{n}}} dx.\ee
Invoking Fourier transform technique as in Lemma \ref{n2} we have
\be \int_{\R} (u_{\beta_{n}}'')^2-\beta_{n}^2 \int_{\R} (u_{\beta_{n}}')^2\geq -\frac{\beta_{n}^4}{4} \int_{\R} u_{\beta_{n}}^2.\ee
Hence we have
\be -\frac{\beta_{n}^4}{4} \int_{\R} u_{\beta_{n}}^2\leq - \int_{\R} (e^{u_{\beta_{n}}}-1) u_{\beta_{n}} dx.\ee
This implies
\be \int_{\R} \bigg[\frac{\beta_{n}^4}{4}  u_{\beta_{n}}^2-  (e^{u_{\beta_{n}}}-1) u_{\beta_{n}}\bigg]dx \geq 0. \ee
As a result, there exists $x_{n}\in \R$ such that
$$\frac{\beta_{n}^4}{4}  u_{\beta_{n}}^2(x_{n})\geq  (e^{u_{\beta_{n}}(x_{n})}-1) u_{\beta_{n}}(x_{n}).$$ Note that this can only happen when
$ u_{\beta_{n}}(x_{n})<0.$ Hence
$$\frac{\beta_{n}^4}{4}\geq \frac{e^{u_{\beta_{n}}(x_{n})}-1}{u_{\beta_{n}}(x_{n})}\geq e^{u_{\beta_{n}}(x_{n})}.$$
Thus $e^{u_{\beta_{n}}(x_{n})}\leq \frac{\beta_{n}^4}{4}$ and hence $u_{\beta_{n}}(x_{n})\leq \ln\frac{\beta_{n}^4}{4}.$  Hence there exists $x_{0}\in \R$ such that $u_{\beta} (x_{0})\leq \ln \frac{\beta^4}{4}<0.$
This implies $u_{\beta}$ is a nontrivial solution of (\ref{a1}).

\section{Range of $\beta$ and Decay Estimates}
In this section, we first find explicit bound for $\beta$ so that (\ref{1.44-1}) holds, and then we prove the decay estimate.
\subsection{Estimate of $\beta$}
First, we find $a_{\star}$.   We recall the following eigenvalue  problem
\be\label{1.38} \left\{
    \begin{aligned}
   \varphi''''+ \beta^2 \varphi''&+ e^{u_{\star}}\varphi = 0 \; &&\text{in } (-a, a) \\
 \varphi(\pm a)&= \varphi'(\pm a)= 0.&&\text{ }
\end{aligned}\right.
\ee
Any even solution of (\ref{1.38}) can be written as $u(x)= A \cos \mu_1 x+ B \cos \mu_2 x$ where
$$\mu_1= \sqrt{\frac{\beta^2}{2}-\sqrt{\frac{\beta^4}{4}- e^{u_{\star}}}}$$ and
$$\mu_2= \sqrt{\frac{\beta^2}{2}+\sqrt{\frac{\beta^4}{4}- e^{u_{\star}}}}.$$
Then they must satisfy
\be\label{1.40}\mu_2 \tan \mu_2 a= \mu_1 \tan \mu_1 a.\ee
Since $\mu_1< \mu_2,$ the function $\mu\tan \mu a$ is increasing where $a\in (0, \frac{\pi}{2\mu_2}).$ Hence (\ref{1.40}) admits a solution in $(\frac{\pi}{2\mu_2}, \frac{3\pi}{2\mu_2}).$
When $e^{u_{\star}}\ll 1,$ $\mu_1$ is close to zero, then
$$\mu_2 \tan \mu_2 a\approx \mu_1 ^2 a.$$
Let $\mu_2 a= \pi+ t.$ Then from (\ref{1.40})
\be\no t \leq \tan t=\frac{\mu_1}{\mu_2}\tan\frac{\mu_1}{\mu_2} (\pi+t), \text{}~~~ t\in (0,\frac{\pi}{2}). \ee

Thus we obtain that $ a \leq a_{\star}$ where
\be a_{\star} :=  \frac{\pi}{\mu_2} + \frac{\mu_1}{\mu_2} \tan\frac{3 \mu_1}{2 \mu_2} \pi.
\ee
The condition (\ref{1.44-1}) can be checked numerically using (\ref{ustar}) to find an approximate bound for $u_{\star}$ and we find that the numerical bound for (\ref{1.44-1}) to hold if $\beta \leq \beta_{\star} \approx 0.742\cdots$.

\subsection{Decay estimates of (\ref{a1})}
Note that $u(x)\rightarrow 0$ as $x \rightarrow \pm\infty.$ Hence \be \frac{e^{u}-1}{u} \rightarrow 1\ee
Hence the limiting equation for fixed $\beta$ at infinity is given by
\be\label{lim} w''''+ \beta^2 w''+ w=0.\ee
Note that this is a linear problem and the roots of the
\be\label{chr} m^4+ \beta^2 m^2+ 1=0\ee  and hence
\be\label{chr1}\bigg(m^2+ \frac{\beta^2}{2}\bigg)^2= \bigg(1-\frac{\beta^4}{4}\bigg)i^2\ee
where $i=\sqrt{-1}.$
Define $n=m^2$ then we have
\be\label{chr2} n=-\frac{\beta^2}{2} \pm\bigg(1-\frac{\beta^4}{4}\bigg)i^2.\ee
Define $\cos 2\eta=-\frac{\beta^2}{2}.$
Then we can write (\ref{chr2}) as
$$n= (\cos 2\eta+ i \sin 2\eta )= (\cos \eta+ i \sin \eta )^2$$ where $\eta\in (\frac{\pi}{4}, \frac{\pi}{2})$ when $x \rightarrow -\infty$ and $\eta\in (- \frac{\pi}{2}, -\frac{\pi}{4})$ when $x \rightarrow +\infty$
as we are looking for decaying solutions.
This implies that the four roots of (\ref{chr1}) are precisely $m= e^{\pm i \eta}$ and $\bar{m}=e^{\pm i \bar{\eta}}.$
If
$$m = e^{i\eta}= \tau + i\de $$
where $\tau= \cos \eta$ and $\de= \sin \eta.$
Hence the general solution of (\ref{lim}) decaying at $\pm\infty$ is given by
\be\label{dec} w(x)= e^{\tau x} \cos( \de x+ b)\chi_{\{x<0\}} +  e^{-\tau x} \cos(\de x+ d)\chi_{\{x>0\}} \ee for some $b, d\in \R$, $\chi$ denotes the characteristic function.
Also note that $\tau$ depends on $\beta.$ As a result, $u$ decays exponentially for each $\beta>0.$

\section{Proof of Theorem \ref{th2}}
The ideas used in proving Theorem \ref{th1} can be readily extended to (\ref{a2}). We make a change of variable $ u-1$ in (\ref{a2}). Then the equation transforms into
\begin{equation}
  \label{aa2}
\left\{\begin{aligned}
      u''''+ \beta^2 u''+ u^{3}+ 3u^2+ 2u &= 0 &&\text{in } \R\\
      u(x) &> -2 &&\text{in } \R\\
      u&\in H^{2}(\R).
    \end{aligned}
  \right.
\end{equation}
Define $J_{\beta}: H^{2}(\R) \rightarrow \R$ as
$$J_{\beta}(u)=\frac{1}{2}\int_{\R} (u'')^2- \frac{\beta^2}{2} \int_{\R} (u')^2+ \frac{1}{4} \int_{\R} u^4+ \int_{\R} u^3+ \int_{\R} u^2 .$$

Smets-van den Berg \cite{SW}  proved that for {\em almost all} $\beta \in (0, \sqrt{8})$, problem (\ref{aa2}) has a homoclinic solution with
\begin{equation}
u>-2.
\end{equation}
Let $ u_{\star}$ be such that
\begin{equation}
u_{\star}= -2+ \frac{k_0}{\sqrt{2}} \beta^2
\end{equation}
By our assumption $ \beta^2 <\frac{\sqrt{2}}{k_0}$, we have $u_{\star} \leq -1$.
In $A= \{ u \geq u_{\star}\}$, we  have
\be
\label{ia3}
 I_{A} \geq \frac{1}{2} \int_{A} (u^{''})^2 -\frac{\beta^2}{2} \int_{A_2} (u^{'})^2 + \frac{k_0^2 \beta^4 }{8} \int_{A} u^2 \geq 0
\end{equation}
by (\ref{K1}) and (\ref{K2}).

Let $A^c=\{u\leq u_{\star}\}=\cup_{j=1}^{k}(a_{j}, b_{j})$, $k$ is finite since $u$ is homoclinic. Let $(a, b)$ be one of the intervals in $A^c$.
If $v=u-u_{\star},$ then we have
\be\label{res1} v''''+ \beta^2 v''+  u(u+1)(u+2)=0.\ee
Multiplying by $v$ and integrating (\ref{res1}) we obtain
\be \label{res2-1} -v'v''\mid_{a}^{b}+\int_{a}^{b} (v'')^2-\beta^2\int_{a}^{b} (v')^2+\int_{a}^{b} u(u+1)(u+2)(u-u_{\star}) =0.\ee
Similar to (\ref{poho}), we obtain a pointwise identity
\be\label{poho1} u'(x) u'''(x)- \frac{(u''(x))^2}{2}+\frac{\beta^2}{2} (u'(x))^2+ \frac{1}{4} u^2(x) (u(x)+2)^2=0.\ee
Integrating (\ref{poho}) we have
\be \label{poho1-10} v'v''\mid_{a}^{b}-\frac{3}{2}\int_{a}^{b} (v'')^2+\frac{\beta^2}{2}\int_{a}^{b} (v')^2+ \frac{1}{4}\int_{a}^{b}  u^2 (u+2)^2=0.\ee
Adding (\ref{res2-1}) and (\ref{poho1-10}) we obtain that
\be \label{id1-1n} \int_{a}^{b} (v'')^2+ \beta^2 (v')^2= 2\int_{a}^{b} u(u+1)(u+2) (u-u_{\star})+ \frac{1}{2} \int_{a}^{b} u^2 (u+2)^2 dx.\ee
Substituting (\ref{id1-1n}) into the energy over $(a,b)$ we have
\bea\label{rese-1} I_{\beta}\mid_{(a, b)}(u)&=& \frac{1}{2}\int_{a}^{b} (u'')^2- \frac{\beta^2}{2}\int_{a}^{b} (u')^2+ \int_{a}^{b} \frac{1}{4} u^2 (u+2)^2dx\no\\&=&\int_{a}^{b} (v'')^2- \int_{a}^{b} u(u+1)(u+2)( u-u_{\star}).\eea

Now we claim that $$I_{\beta}\mid_{(a, b)}(u)\geq 0.$$ In fact,   we have $    u+2>0, u<0, u+1 \leq u_{\star}+1 \leq 0, $ and hence
\begin{equation}
 u(u+1)(u+2) (u-{u_{\star}}) \leq 0 \ \mbox{on} \ (a, b)
\end{equation}
This implies $ I_{\beta} \mid_{(a, b)} (u) \geq 0$ and hence
\be
I_{\beta} \mid_{A} (u) \geq 0.
\ee
The rest of the proof is similar to that of Theorem \ref{th1}. We omit the details. \qed

\section*{acknowledgement} The research was partially supported by an ARC grant, an Earmarked grant from
 RGC of Hong Kong and a Focused Research Scheme from CUHK.


\begin{thebibliography}{99}
\bibitem{AL} {\sc G. Adams, L. Lin;} Beam on a tensionless elastic foundation. {\em Journal of Engineering Mechanics.} 113 (1986), no. 2, 542--553.

\bibitem{BM} {\sc N. Boccara, D. Lanangevin, J. Meunier;} Physics of Amphilic layer. {\em Springer Proc. of Physics} Vol. 21 (1987).

\bibitem{DB} {\sc D. Bonheure;} Multitransition kinks and pulses for fourth order equations with a bistable nonlinearity. {\em Ann. Inst. H. Poincar\'e Anal. Non Lin\'eaire} 21 (2004), no. 3, 319--340.

\bibitem{BSTT} {\sc D. Bonheure, D. Sanchez, M. Tarallo, S. Terracini;} Heteroclinic connections between nonconsecutive equilibria of a fourth order differential equation. {\em Calc. Var. Partial Differential Equations} 17 (2003), no. 4, 341--356.

\bibitem{BHMP} {\sc B. Breuer, J. Hor\'{a}k, P.J. McKenna,  M.Plum;}  A computer-assisted existence and multiplicity proof for travelling waves in a nonlinearly supported beam. {\em J. Differential Equations} 224 (2006), no. 1, 60--97.

\bibitem{B} {\sc B. Buffoni;} Periodic and homoclinic orbits for Lorentz-Lagrangian systems via variational methods. {\em Nonlinear Anal.} 26 (1996), no. 3, 443--462.

\bibitem{BCT}{\sc B. Buffoni, A. Champneys, J. Toland;} Bifurcation and coalescence of a plethora of homoclinic orbits for a Hamiltonian system. {\em J. Dynam. Differential Equations} 8 (1996), no. 2, 221--279.

\bibitem{BH} {\sc A. Blackmore, G. Hunt;} Principle of localised buckling for a strut on an elastoplastic foundation.
{\em Trans. of  AMSE. } 63 (1996), no. 2, 234--239.

\bibitem{CM}  {\sc A. Champneys, P. J. McKenna;} On solitary waves of a piecewise linear suspended beam model. {\em Nonlinearity} 10 (1997), no. 6, 1763--1782.

\bibitem{CMZ}  {\sc A. Champneys, P. J. McKenna, P. Zegeling;}   Solitary waves in nonlinear beam equations: stability, fission and fusion. The theme of solitary waves and localization phenomena in elastic structures. {\em Nonlinear Dynam.} 21 (2000), no. 1, 31--53.

\bibitem{YCM} {\sc Y. Chen, P. J. McKenna;} Travelling waves in a nonlinearly suspended beam: theoretical results and numerical observations. {\em J. Diff. Equations} 136 (1997), no. 2, 325--355.


\bibitem{YCM1} {\sc Y. Chen, P.J. McKenna;} Travelling waves in a nonlinearly suspended beam: some computational results and four open questions. {\em Philos. Trans. Roy. Soc. London Ser. A} 355 (1997), no. 1732, 2175--2184.

\bibitem{Da} {\sc E. Dancer;}  Finite Morse index solutions of supercritical problems, {\em J. Reine Angew. Math.} 620(2008), 213-233.

\bibitem{D} {\sc R. Devaney;} Homoclinic orbits in Hamiltonian systems. {\em J. Differential Equations} 21 (1976), no. 2, 431--438.

\bibitem{EG} {\sc I. Ekeland, N. Ghoussoub;} Selected new aspects of the calculus of variations in the large. {\em Bull. Amer. Math. Soc. (N.S.)}  39  (2002),  no. 2, 207--265

\bibitem{Fa} {\sc  A. Farina}, On the classification of solutions of
Lane-Emden equation on unbounded domains of $\mathbb{R^N}$, {\em J. Math. Pures Appl.} 87(2007), no.5, 537-561.

\bibitem{G} {\sc N. Ghoussoub;} Duality and perturbation methods in critical point theory. {\em Cambridge tracts in Mathematics}  (1993) .

\bibitem{GS} {\sc G. Gompper, M. Schick;} Self-assembling amphiphilic systems {\em PTCP} Vol.16 AP, 1994.

\bibitem{LM}  {\sc A. Lazer, P.J. McKenna;}  Large-amplitude periodic oscillations in suspension bridges: some new connections with nonlinear analysis. {\em SIAM Rev.} 32 (1990), no. 4, 537--578.

\bibitem{LMN} {\sc J. Lega, J. Moloney, A. Newell;} Swift-Hohenberg for lasers. {\em Physics Review Letters} Volume 73, Issue 4, (1994) p. 2978-2981.

\bibitem{M} {\sc P.J. McKenna;} Large-amplitude periodic oscillations in simple and complex mechanical systems: outgrowths from nonlinear analysis. {\em Milan J. Math.} 74(2006), 79-115.

\bibitem{MW1} {\sc P. J. McKenna, W. Walter;} Travelling waves in a suspension bridge. {\em SIAM J. Appl. Math.} 50 (1990), no. 3, 703--715.

\bibitem{PM} {\sc Y. Pomeau, P. Manneville;} Wavelength selection in cellular flows. {\em Physics Letters A} Volume 75, Issue 4, (1980) p. 296-298.

\bibitem{PT} {\sc L.A. Peletier, W.C. Troy;} Spatial Patterns, Higher Order Models in Physics and Mechanics, Birkhauser, 2001.

\bibitem{PT1} {\sc L.A. Peletier, W.C. Troy; } A topological shooting method and the existence of kinks of the extended Fisher-Kolmogorov equation. {\em Topol. Methods Nonlinear Anal.} 6 (1995), no. 2, 331--355.

\bibitem{PT2} {\sc L.A. Peletier, W.C. Troy; } Spatial patterns described by the extended Fisher-Kolmogorov (EFK) equation: kinks. {\em Differential Integral Equations} 8 (1995), no. 6, 1279--1304.

\bibitem{PT3} {\sc L.A. Peletier, W.C. Troy;} Multibump periodic travelling waves in suspension bridges. {\em Proc. Roy. Soc. Edinburgh Sect. A} 128 (1998), no. 3, 631--659.

\bibitem{P} {\sc A. Polyakov;} Gauge fields and strings. {\em Harwood, NY} (1988).

\bibitem{SW} {\sc  D. Smets,  J. B. van den Berg;} Homoclinic solutions for Swift-Hohenberg and suspension bridge type equations. {\em J. Differential Equations} 184 (2002), no. 1, 78--96.;

\bibitem{St}M. Struwe, The existence of surfaces of constant mean curvature with free boundaries, {\em Acta Math.} 160(1988), 19-64.
\bibitem{SH} {\sc J.B. Swift, P.C. Hohenberg}, Hydrodynamic  fluctuations at the convective instability, {\em Phys. Rev. A} 51 (1977), 319-328.

\bibitem{V}  { J. B. van den Berg; } The phase-plane picture for a class of fourth-order conservative differential equations. {\em J. Differential Equations} 161 (2000), no. 1, 110--153.

\end{thebibliography}
\end{document}